\newtheorem{theorem}{Theorem}[section]
\newtheorem{lemma}[theorem]{Lemma}
\newtheorem{proposition}[theorem]{Proposition}
\newtheorem{corollary}[theorem]{Corollary}
\theoremstyle{definition}
\theoremstyle{remark}
\newtheorem{remark}[theorem]{Remark}
\numberwithin{equation}{section}
\DeclareMathOperator{\dive}{div}
\begin{document}

\title[Two-parameter relaxation of the incompressible Navier-Stokes equations]{A hyperbolic relaxation system of the incompressible Navier-Stokes equations with artificial compressibility}



\author[Qian Huang]{Qian Huang*}
\address{Institute of Applied Analysis and Numerical Simulation, University of Stuttgart, 70569 Stuttgart, Germany}
\email{qian.huang@mathematik.uni-stuttgart.de; hqqh91@qq.com}
\thanks{* Corresponding author}

\author[Christian Rohde]{Christian Rohde}
\address{Institute of Applied Analysis and Numerical Simulation, University of Stuttgart, 70569 Stuttgart, Germany}
\email{christian.rohde@mathematik.uni-stuttgart.de}

\author[Wen-An Yong]{Wen-An Yong}
\address{Department of Mathematical Sciences, Tsinghua University, Beijing 100084, China; Beijing Institute of
Mathematical Sciences and Applications, Beijing 101408, China}
\email{wayong@tsinghua.edu.cn}

\author[Ruixi Zhang]{Ruixi Zhang}
\address{Department of Mathematical Sciences, Tsinghua University, Beijing 100084, China}
\email{zhangrx24@mails.tsinghua.edu.cn; 1553548358@qq.com}

\subjclass[2020]{Primary 35Q30 · 76D05}

\keywords{Incompressible Navier-Stokes equations, relaxation approximations, artificial compressibility, hyperbolic singular perturbations, energy estimates}

\date{}

\dedicatory{}

\begin{abstract}

We introduce a new hyperbolic approximation to the incompressible Navier-Stokes equations by incorporating a first-order relaxation and using the artificial compressibility method. With two relaxation parameters in the model, we rigorously prove the asymptotic limit of the system towards the incompressible Navier-Stokes equations as both parameters tend to zero. Notably, the convergence of the  approximate pressure variable is achieved by the help of a linear `auxiliary' system and energy-type error estimates of its differences with the two-parameter model and the Navier-Stokes equations.

\end{abstract}

\maketitle

\section{Introduction}

Consider the incompressible Navier-Stokes equations
\begin{equation} \label{eq:ns}
\left\{ \
\begin{aligned}
  &\nabla\cdot u^{NS} = 0, \\
  &\partial_t u^{NS} + \nabla \cdot (u^{NS} \otimes u^{NS}) = -\nabla p^{NS} + \Delta u^{NS}, \\
  &u^{NS}(0,\cdot) =u_0^{NS}
\end{aligned}
\right.
\end{equation}
for $(t,x)\in [0,T]\times \mathbb T^2$, where $\mathbb T^2$ is the unit periodic square $\mathbb R^2 / \mathbb Z^2$. This set of partial differential equations describes the motion of viscous flows, with the unknowns $(p^{NS},u^{NS}):\ (t,x) \mapsto \mathbb R\times \mathbb R^2$ being respectively the pressure and velocity of the fluid. Here we set the viscosity coefficient as unity. Since only the gradient of $p^{NS}$ is involved, it is convenient to assume $\int_{\mathbb T^2} p^{NS}(t,x)dx = 0$ for all $t$. The existence of smooth solutions for $(p^{NS},u^{NS})$ is well known for smooth initial data \cite{ladyzhenskaya1969,kato1972}.
It is helpful for our purpose to introduce $U^{NS} = u^{NS}\otimes u^{NS} -\nabla u^{NS} \in \mathbb R^{2\times 2}$ and rewrite the $u^{NS}$-equation as
\[
  \partial_t u^{NS} + \nabla \cdot U^{NS} + \nabla p^{NS} = 0.
\]

There are many efforts to approximate (\ref{eq:ns}). An important direction is inspired by the fact that (\ref{eq:ns}) is the hydrodynamic limit of the Boltzmann equation \cite{de1989CPAM,golse2004}. In this regard, it has been rigorously justified that (\ref{eq:ns}) is the limit system of several velocity-discrete Boltzmann-BGK (Bhatnagar-Gross-Krook) equations under the diffusive scaling \cite{JY2003,bianchini2019}. Indeed, such relaxation approximations have been extensively studied in different kinds of systems to benefit both analytical and numerical treatments \cite{jinxin1995,aregba2000}. 
In the spirit of local relaxation approximations, Brenier et al. \cite{Brenier2003} introduced a first-order model via the diffusive scaling as
\begin{equation} \label{eq:relax_para}
\left\{ \
\begin{aligned}
  &\nabla\cdot u^\delta = 0, \\
  &\partial_t u^\delta + \nabla \cdot U^\delta + \nabla p^\delta = 0, \\
  &\delta \partial_t U^\delta + \nabla u^\delta = u^\delta\otimes u^\delta - U^\delta, \\
  &u^\delta(0,\cdot)=u_0^\delta, \ U^\delta(0,\cdot) = U_0^\delta,
\end{aligned}
\right.
\end{equation}
with $\delta>0$ and $U^\delta: (t,x) \mapsto \mathbb R^{2\times 2}$. It is then proved in \cite{Brenier2003} that, for appropriate initial data, a smooth solution $u^\delta$ exists globally and converges to a solution $u^{NS}$ of (\ref{eq:ns}) in $L^\infty([0,T],L^2(\mathbb T^2))$ as $\delta\to 0$, with the error bounded by $O(\sqrt\delta)$. These results have then been advanced in \cite{paicu2007} and further in \cite{hachicha2014}. For more analysis on this system we refer to \cite{racke2012,ilyin2018}.
The semi-linear, first-order structure of (\ref{eq:relax_para}) could bring advantages to the numerical schemes and may even be exploited to compute simpler descriptions for the statistics of turbulent flows \cite{Lundgren,friedrich2012}.

Another influential approach to approximate (\ref{eq:ns}) is the artificial compressibility (AC) method introduced by Chorin \cite{chorin1967ams,chorin1967jcp,chorin1968} and Temam \cite{temam1969I,temam1969II}. This method uses an evolution equation $\epsilon \partial_t p^\epsilon + \nabla \cdot u^\epsilon=0$ (with $\epsilon>0$ and $\sqrt \epsilon$ called the artificial Mach number) to replace the original divergence-free constraint $\nabla\cdot u^{NS}=0$ that raises major computational difficulties. Then the steady-state solution of (\ref{eq:ns}) can be numerically obtained by computing the large-time limit of the solution of the perturbed, hyperbolic-parabolic system. This approach has a profound impact on the development of robust and efficient numerical schemes for (\ref{eq:ns}) \cite{gresho1991,nithiarasu2003,massa2022}.
For the analysis of the singular limit $\epsilon\to 0$, a classical result on the convergence of the solution $(u^\epsilon,\nabla p^\epsilon)$ of the perturbed system towards $(u^{NS},\nabla p^{NS})$ of (\ref{eq:ns}) is presented in \cite{temam2001ns}. For more analysis on the system we refer to \cite{donatelli2010,kagei2021}.

Combining these ideas, we introduce a new approximation to (\ref{eq:ns}) on $[0,T_{\epsilon,\delta})\times \mathbb T^2$ as:
\begin{equation} \label{eq:relax}
\left \{ \
\begin{aligned}
  &\epsilon \partial_t p^{\epsilon,\delta} + \nabla \cdot u^{\epsilon,\delta} =0, \\
  &\partial_t u^{\epsilon,\delta} + \nabla \cdot U^{\epsilon,\delta} + \nabla p^{\epsilon,\delta} = 0, \\
  &\delta \partial_t U^{\epsilon,\delta} + \nabla u^{\epsilon,\delta} = u^{\epsilon,\delta}\otimes u^{\epsilon,\delta} - U^{\epsilon,\delta}, \\
  &u^{\epsilon,\delta}(0,\cdot)=u^{\epsilon,\delta}_0, \ p^{\epsilon,\delta}(0,\cdot)=p^{\epsilon,\delta}_0, \ U^{\epsilon,\delta}(0,\cdot)=U^{\epsilon,\delta}_0,
\end{aligned}
\right.
\end{equation}
where $\epsilon,\delta>0$ are two relaxation parameters. Although this seems to be a natural extension of the above-mentioned models, to our best knowledge, (\ref{eq:relax}) remains yet to be investigated. Now that (\ref{eq:relax}) is a hyperbolic system which admits only finite disturbance propagation velocities and therefore is more physical. 
It surely has the potential to inherit the rich numerical methods for hyperbolic conservation systems. The existence of the solutions to (\ref{eq:relax}) is guaranteed by the well-established theory \cite{kato1975}. At the singular limit $\epsilon,\delta\to 0$, (\ref{eq:ns}) is formally recovered from (\ref{eq:relax}). The purpose of this paper is hence to present a rigorous analysis on the convergence $(u^{\epsilon,\delta},p^{\epsilon,\delta})\to (u^{NS},p^{NS})$ as $\epsilon,\delta\to 0$.

Let us remark that, while the one-parameter limit problem has been widely studied for many equations \cite{klainerman1981,Yong1999}, 
such two-parameter limit problems for (\ref{eq:relax}) are far less explored. 
Considering the limit problems for relaxation approximations and artificial compressibility, it may seem natural to pass to the limit `in sequence' as $(p,u)^{\epsilon,\delta}\to (p,u)^\epsilon \to (p,u)^{NS}$ or $(p,u)^{\epsilon,\delta}\to (p,u)^\delta \to (p,u)^{NS}$. However, the existing convergence results for (\ref{eq:relax_para}) do not characterize the behavior of $p^\delta$ \cite{Brenier2003,paicu2007,hachicha2014}, whereas those for AC \cite{temam2001ns} do not give error estimates which are essential for our purpose.
Therefore, these approaches seem challenging to pursue.
For the two-parameter limit problems, we mention a general convergence-stability principle developed in \cite{Yong2001,brenier2005} for initial value problems of quasilinear symmetrizable hyperbolic systems depending singularly on parameters in a topological space. With this principle and the energy method, the convergence of a relaxed hydrodynamic model for semiconductors is proved in \cite{xu2009}, where two relaxation times are involved.

In our work, as the model (\ref{eq:relax}) is a symmetrizable hyperbolic system for $(\sqrt\epsilon p^{\epsilon,\delta}, u^{\epsilon,\delta}, \sqrt\delta U^{\epsilon,\delta})$, the convergence $u^{\epsilon,\delta}\to u^{NS}$ can be obtained (by assuming appropriate initial data) with the standard energy estimates and the Gagliardo-Nirenberg inequalities. But this straightforward method seems difficult to reach a convergence for $p^{\epsilon,\delta}\to p^{NS}$ (due to the existence of $\sqrt\epsilon$). To overcome the barrier, we introduce an `intermediate' linear system to pass to the limit as $(p,u)^{\epsilon,\delta}\to (p,u)^{\text{linear}} \to (p,u)^{NS}$ with energy-type error estimates in both steps. This strategy works successfully for $\delta = O(\sqrt\epsilon)$. In addition, the inclusion of a linear friction (relaxation) term in the model is straightforward.

The remainder of the paper is organized as follows. The main results and lemmas are collected in Section \ref{sec:main}. Some lemmas are proved in the appendices. Section \ref{sec:uconv} presents the proofs of Theorems \ref{thm:uconvL2} and \ref{thm:uconvH1} (the convergence of $u^{\epsilon,\delta}$), and Section \ref{sec:upconv} is devoted to the proof of Theorem \ref{thm:pconv} (the convergence of $(p,u)^{\epsilon,\delta}$). Conclusions are drawn in Section \ref{sec:concl}.

We fix some notations. Throughout the text, $C$ and $C'$ denote sufficiently large, generic constants that may rely on the initial datum of (\ref{eq:relax}) and (\ref{eq:ns}) but \textit{are independent of} $(\epsilon,\delta)$. Sometimes $C_T$, $C_t$ or $C_\gamma$ are used to express the dependence on the parameters $T$, $t$ (time) or $\gamma$. The $L^p$- ($p=2$ or $\infty$) and $H^s$- ($s=1,2$) norms are both for $\mathbb T^2$. We denote by $A:B$ the Frobenius inner product of two matrices $A$ and $B$, $\dive = (\nabla \cdot)$ the divergence operator, and $\Delta=\nabla^2$ the Laplacian operator.

\section{Main results} \label{sec:main}
In this section, we present our main results. In what follows we assume that $(p^{NS},u^{NS})$ is the smooth solution of (\ref{eq:ns}), and that $(u^{\epsilon,\delta}, p^{\epsilon,\delta}, U^{\epsilon,\delta})$ is a series of solutions of (\ref{eq:relax}) on $(t,x)\in [0,T_{\epsilon,\delta})\times \mathbb T^2$ with smooth initial data $(u_0^{\epsilon,\delta}, p_0^{\epsilon,\delta}, U_0^{\epsilon,\delta})$. Notice again that the existence and uniqueness of the solutions to (\ref{eq:relax}) are guaranteed by the classical theory \cite{kato1975}. For the asymptotic behavior of the sequence $u^{\epsilon,\delta}$ as $\epsilon,\delta\to 0$, we have


\begin{theorem} \label{thm:uconvL2}
 Assume that there exists a constant $a\in [1,2)$ such that
  \[
    \|u_0^{\epsilon,\delta}-u^{NS}_0\|_{L^2}^2 + (\epsilon+\delta)^a \| \Delta u_0^{\epsilon,\delta} \|_{L^2}^2 \le C(\epsilon+\delta)
  \]
  and
  \[
    \| \left( p_0^{\epsilon,\delta}, U_0^{\epsilon,\delta} \right) \|_{L^2}^2 + (\epsilon+\delta)^a \| \Delta \left( p_0^{\epsilon,\delta}, U_0^{\epsilon,\delta} \right) \|_{L^2}^2 \le C.
  \]
  Then the existing time $T_{\epsilon,\delta}$ of the solutions $(u^{\epsilon,\delta}, p^{\epsilon,\delta}, U^{\epsilon,\delta})$ satisfies
  \[
    T_{\epsilon,\delta} \ge C' \left[\left( \frac{a}{2}-1 \right) \log(\epsilon+\delta) - 1\right].
  \]
  Moreover, for any $T<T_{\epsilon,\delta}$, we have
  \[
    \sup_{0<t<T} \| u^{\epsilon,\delta}(t,\cdot) - u^{NS}(t,\cdot) \|_{L^2}^2 \le C_T(\epsilon+\delta)
  \]
  and
  \[
    \sup_{0<t<T} \| u^{\epsilon,\delta}(t,\cdot) \|_{L^\infty} \le C_T
  \]
  for $\epsilon+\delta\ll 1$.
\end{theorem}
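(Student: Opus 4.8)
The plan is to turn (\ref{eq:ns}) and (\ref{eq:relax}) into a single error system and to run a continuity (bootstrap) argument on a two‑scale, weighted energy functional whose weights are calibrated to the range $a\in[1,2)$. Setting $q:=p^{\epsilon,\delta}-p^{NS}$, $v:=u^{\epsilon,\delta}-u^{NS}$, $V:=U^{\epsilon,\delta}-U^{NS}$ and using $\nabla\cdot u^{NS}=0$, the identity $U^{NS}=u^{NS}\otimes u^{NS}-\nabla u^{NS}$, and $u^{\epsilon,\delta}\otimes u^{\epsilon,\delta}-u^{NS}\otimes u^{NS}=v\otimes u^{\epsilon,\delta}+u^{NS}\otimes v$, subtraction gives
\[
\epsilon\partial_t q+\nabla\cdot v=-\epsilon\,\partial_t p^{NS},\qquad \partial_t v+\nabla\cdot V+\nabla q=0,
\]
\[
\delta\partial_t V+\nabla v = v\otimes u^{\epsilon,\delta}+u^{NS}\otimes v-V-\delta\,\partial_t U^{NS}.
\]
Thus $(q,v,V)$ obeys the same linear principal part as (\ref{eq:relax}), forced by smooth terms of sizes $O(\epsilon)$ and $O(\delta)$ and by a term at most quadratic in $v$; note that (\ref{eq:relax}) is in fact semilinear, the only nonlinearity $u^{\epsilon,\delta}\otimes u^{\epsilon,\delta}$ being of zeroth order, so its smooth solution persists as long as $\|u^{\epsilon,\delta}\|_{L^\infty}$ stays bounded.

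I would carry the functional
\[
\mathcal N(t)^2:=\|v\|_{L^2}^2+\epsilon\|q\|_{L^2}^2+\delta\|V\|_{L^2}^2+(\epsilon+\delta)^a\Bigl(\|\Delta v\|_{L^2}^2+\epsilon\|\Delta q\|_{L^2}^2+\delta\|\Delta V\|_{L^2}^2\Bigr).
\]
Since $(p^{NS},u^{NS})$ (hence $U^{NS}$) is smooth and $a\ge1$ (so $(\epsilon+\delta)^a\le\epsilon+\delta$), the two data hypotheses yield $\mathcal N(0)^2\le C(\epsilon+\delta)$. I then bootstrap on $\mathcal N(t)^2\le(\epsilon+\delta)^{a/2}$: on such an interval the two-dimensional Gagliardo-Nirenberg inequality $\|f\|_{L^\infty}^2\le C\|f\|_{L^2}\bigl(\|f\|_{L^2}+\|\Delta f\|_{L^2}\bigr)$ with $\|\Delta v\|_{L^2}\le(\epsilon+\delta)^{-a/2}\mathcal N$ gives $\|v\|_{L^\infty}^2\le C(\epsilon+\delta)^{-a/2}\mathcal N^2\le C$ (bounded because $a<2$), hence $\|u^{\epsilon,\delta}\|_{L^\infty}\le\|u^{NS}\|_{L^\infty}+\|v\|_{L^\infty}\le M$ for some $M$ independent of $(\epsilon,\delta)$. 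Next, multiplying the error equations by $q,v,V$ (Frobenius inner product in the matrix equation), integrating on $\mathbb T^2$ and adding, periodicity cancels the first-order coupling after integration by parts, the relaxation term yields the dissipation $\|V\|_{L^2}^2$, the forcings contribute $O(\epsilon+\delta)$ by Young's inequality, and the source is bounded by $\|u^{\epsilon,\delta}\|_{L^\infty}\|v\|_{L^2}\|V\|_{L^2}\le\tfrac14\|V\|_{L^2}^2+CM^2\|v\|_{L^2}^2$. The same computation on the $\Delta$-differentiated system, weighted by $(\epsilon+\delta)^a$, gives the dissipation $(\epsilon+\delta)^a\|\Delta V\|_{L^2}^2$; in the commutators $\Delta(v\otimes u^{\epsilon,\delta})$, $\Delta(u^{NS}\otimes v)$ the top-order terms are treated as above, those with $\Delta u^{\epsilon,\delta}$ via $\|v\|_{L^\infty}(\|\Delta u^{NS}\|_{L^2}+\|\Delta v\|_{L^2})$, and the quadratic first-order terms via Ladyzhenskaya's inequality and interpolation, $\|\nabla v\|_{L^4}^2\le C\|v\|_{L^2}^{1/2}\|\Delta v\|_{L^2}^{3/2}$; the resulting cubic term satisfies $(\epsilon+\delta)^a\|v\|_{L^2}\|\Delta v\|_{L^2}^3\le(\epsilon+\delta)^{-a/2}\mathcal N^4\le\mathcal N^2$, absorbed precisely by the bootstrap bound. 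Summing both levels gives $\tfrac{d}{dt}\mathcal N^2\le C\mathcal N^2+C(\epsilon+\delta)$, so Gronwall yields $\mathcal N(t)^2\le C_0(\epsilon+\delta)e^{Ct}$ on the interval.

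Because $\mathcal N(0)^2\le C(\epsilon+\delta)<(\epsilon+\delta)^{a/2}$ for $\epsilon+\delta\ll1$ and $C_0(\epsilon+\delta)e^{Ct}$ remains below $(\epsilon+\delta)^{a/2}$ as long as $t<\bar\tau:=\tfrac1C\bigl[(\tfrac a2-1)\log(\epsilon+\delta)-\log C_0\bigr]$, the bootstrap assumption — hence $\|u^{\epsilon,\delta}\|_{L^\infty}\le M$ and $\mathcal N(t)^2\le C_0(\epsilon+\delta)e^{Ct}$ — persists on $[0,\min\{\bar\tau,T_{\epsilon,\delta}\})$. Since the semilinear solution continues while $\|u^{\epsilon,\delta}\|_{L^\infty}$ is bounded, $T_{\epsilon,\delta}$ cannot be smaller than $\bar\tau$, which after renaming the constant is the claimed lower bound. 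Finally, for $T<T_{\epsilon,\delta}$ in this range, $\sup_{0<t<T}\|u^{\epsilon,\delta}-u^{NS}\|_{L^2}^2\le\sup_{0<t<T}\mathcal N(t)^2\le C_0(\epsilon+\delta)e^{CT}=:C_T(\epsilon+\delta)$, and $\sup_{0<t<T}\|u^{\epsilon,\delta}\|_{L^\infty}\le M$.

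The main obstacle is the circular coupling of the two a priori bounds: the energy estimate needs $\|u^{\epsilon,\delta}\|_{L^\infty}$ bounded to control the quadratic source, but that bound comes — through Gagliardo-Nirenberg — from the weighted $\dot H^2$ part of $\mathcal N$, which itself is controlled only by the energy estimate. Closing this loop dictates both weight choices, the factor $(\epsilon+\delta)^a$ inside $\mathcal N$ and the ceiling $(\epsilon+\delta)^{a/2}$ in the bootstrap, and it is exactly $a\in[1,2)$ that makes $\mathcal N(0)^2\lesssim\epsilon+\delta$ (needs $a\ge1$), keeps $\|v\|_{L^\infty}$ uniformly bounded on the bootstrap set, and lets the cubic term in the $\dot H^2$ estimate be absorbed (these need $a<2$). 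The logarithmic existence time is the unavoidable trade-off between the exponential Gronwall growth and the gap between the initial size $\epsilon+\delta$ and the bootstrap ceiling $(\epsilon+\delta)^{a/2}$.
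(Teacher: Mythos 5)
Your proposal is correct and takes essentially the same route as the paper: the same residual system (your unscaled $(q,v,V)$ is just a cosmetic rescaling of the paper's $W$), the same two-level energy $\|\cdot\|_{L^2}^2+(\epsilon+\delta)^a\|\Delta\cdot\|_{L^2}^2$, the same Ladyzhenskaya/Gagliardo--Nirenberg interpolations for the quadratic terms, and the same mechanism producing the logarithmic lower bound on $T_{\epsilon,\delta}$ from the gap between the initial size $\epsilon+\delta$ and the ceiling $(\epsilon+\delta)^{a/2}$. The only differences are bookkeeping --- you close the estimate by a bootstrap on $\mathcal N^2\le(\epsilon+\delta)^{a/2}$ plus Gronwall, while the paper integrates the nonlinear inequality $\frac{d}{dt}E\le C\bigl(\sqrt{(\epsilon+\delta)E}+E+(\epsilon+\delta)^{-a/2}E^2\bigr)$ directly, and you invoke without proof the $L^\infty$ continuation criterion for the semilinear system, which the paper states and proves separately as Lemma \ref{lem:explosion} (your uniform $H^2$ control on the bootstrap interval would in any case suffice for continuation).
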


This theorem indicates that the existing time $T_{\epsilon,\delta}$ is uniformly bounded from below and $T_{\epsilon,\delta}\to\infty$ as $\epsilon,\delta\to 0$. 
A similar result can be stated in the $H^1$-framework as
\begin{theorem} \label{thm:uconvH1}
  Assume that there exists a constant $a\ge 1$ such that
  \[
    \| u_0^{\epsilon,\delta} - u_0^{NS} \|_{H^1}^2 + (\epsilon+\delta)^a \| \Delta u_0^{\epsilon,\delta}\|_{L^2}^2 \le C(\epsilon+\delta)
  \]
  and
  \[
    \| \left( p_0^{\epsilon,\delta}, U_0^{\epsilon,\delta} \right) \|_{H^1}^2 + (\epsilon+\delta)^a \| \Delta \left( p_0^{\epsilon,\delta}, U_0^{\epsilon,\delta} \right) \|_{L^2}^2 \le C.
  \]
  Likewise, for any $T<T_{\epsilon,\delta}$ (which still tends to $\infty$ as $\epsilon+\delta\to 0$), we have
  \[
    \sup_{0<t<T} \| u^{\epsilon,\delta}(t,\cdot) - u^{NS}(t,\cdot) \|_{L^2}^2 \le C_T (\epsilon+\delta)
  \]
  and
  \[
    \sup_{0<t<T} \| u^{\epsilon,\delta}(t,\cdot) \|_{L^\infty} \le C_T
  \]
  for $\epsilon+\delta\ll 1$.
\end{theorem}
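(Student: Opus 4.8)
I would argue by a continuation (bootstrap) scheme parallel to the one behind Theorem~\ref{thm:uconvL2}, carrying every energy estimate one derivative higher. First, rewrite (\ref{eq:relax}) in the symmetrizing variables $(\sqrt\epsilon\,p^{\epsilon,\delta},u^{\epsilon,\delta},\sqrt\delta\,U^{\epsilon,\delta})$: the principal part is then constant-coefficient and, after integration by parts over $\mathbb T^2$, skew-symmetric, so the only surviving zeroth-order term in an $L^2$-based energy identity is the dissipative relaxation term $-U^{\epsilon,\delta}$. Moreover, since $U^{\epsilon,\delta}$ relaxes towards $u^{\epsilon,\delta}\otimes u^{\epsilon,\delta}-\nabla u^{\epsilon,\delta}$, the system satisfies a Kawashima-type condition: adding to the natural energy a small multiple of a compensating cross term $\delta\eta\!\int\!\nabla u^{\epsilon,\delta}\!:\!U^{\epsilon,\delta}$ (and its differentiated analogues) also produces damping of $\nabla u^{\epsilon,\delta}$ (resp.\ $\Delta u^{\epsilon,\delta}$). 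Fix $M:=2\sup_{[0,T]}\|u^{NS}\|_{L^\infty}+1$ and work on the largest subinterval of $[0,T_{\epsilon,\delta})$ on which $\|u^{\epsilon,\delta}(t,\cdot)\|_{L^\infty}+\|\nabla u^{\epsilon,\delta}(t,\cdot)\|_{L^2}\le M$; all generic constants may depend on $M$ and $T$ but not on $(\epsilon,\delta)$.

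The heart of the argument is a single combined functional, roughly
\[
\mathcal G(t):=\bigl(\epsilon\|\bar p\|_{H^1}^2+\|\bar u\|_{H^1}^2+\delta\|\bar U\|_{H^1}^2\bigr)+(\epsilon+\delta)^a\bigl(\epsilon\|\Delta\bar p\|^2+\|\Delta\bar u\|^2+\delta\|\Delta\bar U\|^2\bigr)+(\text{cross terms}),
\]
for the error $(\bar p,\bar u,\bar U):=(p^{\epsilon,\delta}-p^{NS},u^{\epsilon,\delta}-u^{NS},U^{\epsilon,\delta}-U^{NS})$ with $U^{NS}:=u^{NS}\otimes u^{NS}-\nabla u^{NS}$; the error system carries the two consistency forcings $-\epsilon\partial_t p^{NS}$, $-\delta\partial_t U^{NS}$ (of size $O(\epsilon)$, $O(\delta)$ in every Sobolev norm by smoothness of $(p^{NS},u^{NS})$) and the bilinear term $\bar u\otimes u^{\epsilon,\delta}+u^{NS}\otimes\bar u$. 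Testing the differentiated error system against the corresponding multiples of itself, cancelling the differential terms by skew-symmetry, absorbing the quadratic terms into the dissipation ($-\|\bar U\|_{H^1}^2$, the Kawashima-generated $-\|\Delta\bar u\|^2$, and the weighted second-order analogues) via Young's inequality, the two-dimensional Gagliardo--Nirenberg/Ladyzhenskaya inequalities and the standing bound $\|u^{\epsilon,\delta}\|_{L^\infty}+\|\nabla u^{\epsilon,\delta}\|_{L^2}\le M$, and invoking Gronwall, I expect $\mathcal G(t)\le C_T(\epsilon+\delta)$. Two inputs are decisive: \emph{(a)} the $H^1$-regularity of the data, which makes $\mathcal G(0)\le C(\epsilon+\delta)$ --- e.g.\ $(\epsilon+\delta)^a\|\Delta\bar u_0\|^2\le C(\epsilon+\delta)^a\|\Delta u_0^{\epsilon,\delta}\|^2+C(\epsilon+\delta)^a\le C(\epsilon+\delta)$ since $a\ge1$ --- and makes $\|\nabla(p_0^{\epsilon,\delta},u_0^{\epsilon,\delta},U_0^{\epsilon,\delta})\|_{L^2}\le C$, so that the $\|\nabla u^{\epsilon,\delta}\|_{L^2}$ part of the bootstrap can be recovered (this fails in the $L^2$-framework of Theorem~\ref{thm:uconvL2}); and \emph{(b)} the Kawashima dissipation, which absorbs the terms carrying the possibly large norm $\|\Delta\bar u\|\lesssim(\epsilon+\delta)^{-(a-1)/2}$ and so leaves only $O(\epsilon+\delta)$ forcing, for \emph{every} $a\ge1$ --- this is where the restriction $a<2$ of Theorem~\ref{thm:uconvL2}, which has access only to the $L^2$-level error estimate plus a weighted $H^2$-estimate of the solution, is dropped. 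Since $\|\bar u\|_{L^2}^2\le\mathcal G$, this already gives the asserted $L^2$-convergence.

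From $\mathcal G(t)\le C_T(\epsilon+\delta)$ one has $\|\bar u(t)\|_{H^1}^2\le C_T(\epsilon+\delta)$ and $\|\Delta\bar u(t)\|^2\le C_T(\epsilon+\delta)^{1-a}$, so $\|\bar u(t)\|_{H^2}$ is finite, bounded by a fixed power of $\epsilon+\delta$. Picking any $\theta\in(0,1/a)$ and combining $H^{1+\theta}(\mathbb T^2)\hookrightarrow L^\infty(\mathbb T^2)$ with $\|\bar u\|_{H^{1+\theta}}\le C\|\bar u\|_{H^1}^{1-\theta}\|\bar u\|_{H^2}^{\theta}$ gives $\|\bar u(t)\|_{L^\infty}\le C_T(\epsilon+\delta)^{(1-\theta a)/2}\to0$ uniformly on $[0,T]$; hence $\|u^{\epsilon,\delta}\|_{L^\infty}\le\|u^{NS}\|_{L^\infty}+o(1)<M$, which strictly improves the standing assumption and therefore propagates, proving the $L^\infty$-bound (the $\|\nabla u^{\epsilon,\delta}\|_{L^2}$ part of the bootstrap is recovered the same way, being $\le\|\nabla u^{NS}\|_{L^2}+\mathcal G^{1/2}$). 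Together with the standard continuation criterion for the semilinear system (\ref{eq:relax}) based on \cite{kato1975}, the solution persists at least up to the time where the Gronwall factor $C_T=Ce^{CT}$ still keeps the above $o(1)$ genuinely small, i.e.\ a time of order $|\log(\epsilon+\delta)|$, which tends to $\infty$.

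The step I expect to be the main obstacle is closing the combined estimate: the $H^1$-control of the error, the $H^1$-control of the solution and the weighted second-order control are mutually coupled (e.g.\ the second-order estimate feeds $\|\Delta\bar u\|$ into the first-order one and vice versa), so the weights and the cross-term coefficients must be chosen so that every quadratic coupling is dominated by the available dissipation with $(\epsilon,\delta)$-independent constants and $O(\epsilon+\delta)$ forcing. Establishing the precise Kawashima-type dissipation for (\ref{eq:relax}) and tracking how it interacts with the degenerate factor $(\epsilon+\delta)^a$ is the crux; the remaining closure, the $L^\infty$-bootstrap and the lifespan estimate then follow the by-now-standard pattern.
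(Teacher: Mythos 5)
Your plan diverges substantially from the paper's proof, and the place where it diverges is exactly where it is not closed. The paper never uses (and does not need) any Kawashima-type compensating functional: it works with the residual variables $W=(q,v,V)=(\sqrt\epsilon(p^{\epsilon,\delta}-p^{NS}),\,u^{\epsilon,\delta}-u^{NS},\,\sqrt\delta(U^{\epsilon,\delta}-U^{NS}))$ of (\ref{eq:resvar}), the energy $E=\|W\|_{H^1}^2+(\epsilon+\delta)^a\|\Delta W\|_{L^2}^2$, and only the relaxation dissipation $-\|V\|_{L^2}^2/\delta$ (and its derivatives) to absorb the quadratic sources; the nonlinear terms are then bounded by powers of $E$ with negative powers of $(\epsilon+\delta)$ via Ladyzhenskaya and (\ref{eq:est_inf}), and the resulting \emph{nonlinear} differential inequality $E'\le C(\sqrt{(\epsilon+\delta)E}+E+(\epsilon+\delta)^{-1/2}E^2)$ is integrated by an ODE comparison, which simultaneously yields the $|\log(\epsilon+\delta)|$ lifespan (through the blow-up criterion, Lemma \ref{lem:explosion}) and $\sup_{t<T}E\le C_T(\epsilon+\delta)$. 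In particular, your ``decisive input (b)'' is a misdiagnosis: the reason $a\ge1$ is admissible here, versus $a<2$ in Theorem \ref{thm:uconvL2}, is not any extra dissipation on $\Delta u^{\epsilon,\delta}$ but simply that, once $\|\nabla v\|_{L^2}$ sits in the energy, the interpolation (\ref{eq:est_inf}) can be used with $\gamma=1/a$ instead of $\gamma=1$, so the exponent $1-\frac{a\gamma}{2}$ stays positive for every $a\ge1$.

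The genuine gaps in your route are the following. First, the Kawashima dissipation you invoke is asserted, not established, and you yourself flag it as the crux; its verification is problematic here because the time derivative of the cross term $\delta\eta\int\nabla u^{\epsilon,\delta}\!:\!U^{\epsilon,\delta}$ produces, via $\partial_t u^{\epsilon,\delta}=-\nabla\cdot U^{\epsilon,\delta}-\nabla p^{\epsilon,\delta}$, a pressure coupling of the form $\delta\int\nabla p^{\epsilon,\delta}\cdot(\nabla\cdot U^{\epsilon,\delta})\,dx$; since $p$ is only controlled with weight $\epsilon$ and $U$ with weight $\delta$ in your functional, its natural bound carries a factor of order $\delta/\sqrt{\epsilon\delta}=\sqrt{\delta/\epsilon}$ (or $\delta^2/\epsilon$ after Young), which is unbounded because Theorem \ref{thm:uconvH1}, unlike Theorem \ref{thm:pconv}, assumes \emph{no} relation between $\epsilon$ and $\delta$. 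Second, the claimed linear Gronwall $\mathcal G(t)\le Ce^{Ct}(\epsilon+\delta)$ does not follow from your standing bound $\|u^{\epsilon,\delta}\|_{L^\infty}+\|\nabla u^{\epsilon,\delta}\|_{L^2}\le M$: already the first-order error estimate contains $\|\bar u\otimes\nabla u^{\epsilon,\delta}\|_{L^2}^2$, which requires $\|\nabla u^{\epsilon,\delta}\|_{L^4}$ and hence $\|\Delta u^{\epsilon,\delta}\|_{L^2}\lesssim(\epsilon+\delta)^{-a/2}\mathcal G^{1/2}+C$, reintroducing superlinear terms in $\mathcal G$ with negative powers of $(\epsilon+\delta)$ — unless the unproven Kawashima bound supplies an unweighted control of $\|\Delta u^{\epsilon,\delta}\|_{L^2}$, which is circular as stated. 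The paper's way out is precisely to accept these superlinear terms and run the nonlinear ODE comparison (as in (\ref{eq:enesL2}) and its $H^1$ analogue); without either that argument or a genuinely verified compensating-function estimate uniform in both parameters and in the degenerate weight $(\epsilon+\delta)^a$, your scheme does not close.
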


\begin{remark}
  In the single-parameter limit $\delta\to 0$ of (\ref{eq:relax_para}), a similar convergence rate $\sup_{0<t<T}\|u^\delta(t,\cdot)-u^{NS}(t,\cdot)\|_{L^2}^2 \le C_T\sqrt\delta$ is proved in \cite{Brenier2003} for appropriate initial conditions.
\end{remark}

It will become clear in Section \ref{sec:uconv} that the proof by standard energy estimates for the above theorems only gives $\sup_{0<t<T} \| p^{\epsilon,\delta}(t,\cdot) - p^{NS}(t,\cdot) \|_{L^2}^2 = O(1)$. Instead, a convergence result for $p^{\epsilon,\delta}$ can be obtained for $
\delta=O(\sqrt\epsilon)$ and stricter requirements on the initial data:
\begin{theorem} \label{thm:pconv}
  Assume $\delta \le C\sqrt\epsilon$ as $\epsilon\to 0$. Suppose that
  \[
    \|u_0^{\epsilon,\delta}-u_0^{NS}\|_{H^1} + \sqrt\epsilon \|p_0^{\epsilon,\delta}-p_0^{NS}\|_{H^1} + (\epsilon+\delta)\|U_0^{\epsilon,\delta}\|_{H^1} \le C(\epsilon+\delta)
  \]
  and
  \[
    \|\nabla\dive u_0^{\epsilon,\delta}\|_{L^2} + \delta \left( \| \Delta p_0^{\epsilon,\delta} \|_{L^2} + \| \nabla \dive U_0^{\epsilon,\delta} \|_{L^2} \right) \le C(\epsilon+\delta).
  \]
  Moreover, suppose that for some $a>0$, it holds that
  \[
    \| \Delta (u_0^{\epsilon,\delta}, p_0^{\epsilon,\delta}, U_0^{\epsilon,\delta}) \|_{L^2} \le C(\epsilon+\delta)^{-a}.
  \]
  Then for any $T<T_{\epsilon,\delta}$, we have
  \[
    \sup_{0<t<T} \left( \|u^{\epsilon,\delta}(t,\cdot)-u^{NS}(t,\cdot)\|_{H^1} + \sqrt\epsilon \| p^{\epsilon,\delta}(t,\cdot) - p^{NS}(t,\cdot) \|_{H^1} \right) \le C_T(\epsilon+\delta)
  \]
  for $\epsilon + \delta \ll 1$.
\end{theorem}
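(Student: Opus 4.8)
\emph{Overview of the strategy.} The plan is to interpose between \eqref{eq:relax} and \eqref{eq:ns} a \emph{linear} relaxation system and to carry out two successive energy‑type error estimates. Let $(p^L,u^L,U^L)$ be the solution of
\[
\left\{ \ \begin{aligned}
  &\epsilon \partial_t p^L + \dive u^L = 0, \\
  &\partial_t u^L + \dive U^L + \nabla p^L = 0, \\
  &\delta \partial_t U^L + \nabla u^L = u^{NS}\otimes u^{NS} - U^L,
\end{aligned}\right.
\]
that is, \eqref{eq:relax} with the quadratic self‑interaction frozen at the smooth Navier--Stokes flow, subject to the Navier--Stokes initial data $\big(p_0^{NS},\,u_0^{NS},\,U_0^{NS}\big)$ with $U_0^{NS}:=u_0^{NS}\otimes u_0^{NS}-\nabla u_0^{NS}$. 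Since the system is linear, symmetrizable hyperbolic, with a smooth forcing, it has a smooth solution on $[0,T]$; plain energy estimates for the variables and for their first two spatial derivatives give $\|(p^L,u^L,U^L)\|_{H^2}\le C_T$ uniformly in $(\epsilon,\delta)$. Crucially this choice of initial data is well prepared: one checks $\partial_t p^L(0)=-\epsilon^{-1}\dive u_0^{NS}=0$, $\partial_t U^L(0)=0$, $\partial_t u^L(0)=\partial_t u^{NS}(0)$, and even $\partial_t^2 p^L(0)=0$; consequently the time‑differentiated energy estimates are also uniform, so $\|\partial_t(p^L,u^L,U^L)\|_{H^1}\le C_T$, and moreover $U^L$ stays within $O(\delta)$ of its local equilibrium $u^{NS}\otimes u^{NS}-\nabla u^L$, whence $\|\dive u^L\|_{H^1}=\epsilon\|\partial_t p^L\|_{H^1}=O(\epsilon)$ (here one also uses the compatibility $\Delta p^{NS}=-\dive\dive(u^{NS}\otimes u^{NS})$, which forces the acoustic part of $p^L$ to be purely transient of amplitude $O(\sqrt\epsilon)$, hence with time derivative of size $O(1)$).

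\emph{Step 1: the gap between \eqref{eq:relax} and $(p^L,u^L,U^L)$.} The differences satisfy the \emph{same} linear system, driven only by the source $u^{\epsilon,\delta}\otimes u^{\epsilon,\delta}-u^{NS}\otimes u^{NS}$ in the $U$-equation and by a small initial datum (controlled by the hypotheses in the weighted norm $\sqrt\epsilon\|\cdot\|_{H^1}+\|\cdot\|_{H^1}+\sqrt\delta\|\cdot\|_{H^1}$). Writing $e:=u^{\epsilon,\delta}-u^{NS}$ and using $\|u^{\epsilon,\delta}\|_{L^\infty}\le C_T$ — which follows, as in Theorems \ref{thm:uconvL2}--\ref{thm:uconvH1}, from propagated $H^2$ bounds (the mild hypothesis $\|\Delta u_0^{\epsilon,\delta}\|_{L^2}\le C(\epsilon+\delta)^{-a}$ keeps $T_{\epsilon,\delta}$ large), the two-dimensional Gagliardo--Nirenberg inequality, and a continuation argument — the source is bounded by $C_T\|e\|_{H^1}$ and half of it is absorbed by the relaxation dissipation coming from $-U$. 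A Grönwall argument then bounds the $(\epsilon,\delta)$-weighted $H^1$ energy of the differences by $C_T\big((\epsilon+\delta)^2+\int_0^t\|e\|_{H^1}^2\big)$. To upgrade this to the \emph{sharp} $O(\epsilon+\delta)$ rate for the velocity difference alone — the raw energy only gives $O(\sqrt{\epsilon+\delta})$ because of the $\delta$-weight in front of the $U$-component — one tests the momentum-difference equation against its Leray projection $\mathbb P$: the pressure gradient drops out, the gradient of the velocity difference reappears as dissipation, and the remaining contributions are either of favourable sign or, after one integration by parts in time, of size $O(\epsilon+\delta)$.

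\emph{Step 2: the gap between $(p^L,u^L,U^L)$ and \eqref{eq:ns}.} Using $\dive u^{NS}=0$ and $\nabla u^{NS}=u^{NS}\otimes u^{NS}-U^{NS}$, the error $(\bar p,\bar u,\bar U)$ solves the linear system with forcing $(-\epsilon\partial_t p^{NS},\,0,\,-\delta\partial_t U^{NS})$ and \emph{zero} initial datum; the energy identity
\[
\tfrac12\tfrac{d}{dt}\!\left(\epsilon\|\bar p\|_{L^2}^2+\|\bar u\|_{L^2}^2+\delta\|\bar U\|_{L^2}^2\right)+\|\bar U\|_{L^2}^2 = -\epsilon\!\int_{\mathbb T^2}\!\bar p\,\partial_t p^{NS}-\delta\!\int_{\mathbb T^2}\!\bar U:\partial_t U^{NS}
\]
only yields the crude bound $O(\sqrt\epsilon)$. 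The sharpening uses the Helmholtz decomposition $u^L=\mathbb P u^L+\nabla\Delta^{-1}\dive u^L$: the gradient part is $O(\epsilon)$ in $H^1$ by the estimate on $\|\dive u^L\|_{H^1}$ above, while the solenoidal part satisfies the forced heat equation $\partial_t(\mathbb P u^L-u^{NS})-\Delta(\mathbb P u^L-u^{NS})=\delta\,\mathbb P\partial_t\dive U^L$ with vanishing initial datum and forcing of size $O(\delta)$ (here the near-equilibrium property of $U^L$ enters), hence is $O(\delta)$ in $H^1$; adding the two gives $\|u^L-u^{NS}\|_{H^1}=O(\epsilon+\delta)$. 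The pressure difference is then read off from $\nabla(p^L-p^{NS})=-\partial_t(u^L-u^{NS})+\Delta(u^L-u^{NS})+\delta\,\partial_t\dive U^L$: the last two terms are already $O(\epsilon+\delta)$, and $\|\partial_t(u^L-u^{NS})\|_{L^2}=O(\sqrt\epsilon)$ by the transient-acoustic bound, so $\sqrt\epsilon\|p^L-p^{NS}\|_{H^1}=O(\epsilon)\le C_T(\epsilon+\delta)$.

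\emph{Conclusion and expected obstacle.} Combining Steps 1 and 2 by the triangle inequality in the norm $\|\cdot\|_{H^1}+\sqrt\epsilon\|\cdot\|_{H^1}$, and setting $\Phi(t):=\sup_{[0,t]}\big(\|u^{\epsilon,\delta}-u^{NS}\|_{H^1}^2+\epsilon\|p^{\epsilon,\delta}-p^{NS}\|_{H^1}^2\big)$, one arrives at $\Phi(t)\le C_T(\epsilon+\delta)^2+C_T\int_0^t\Phi(s)\,ds$, and Grönwall closes the estimate; read as a continuation argument, the same inequality a posteriori secures the bound $\|u^{\epsilon,\delta}\|_{L^\infty}\le C_T$ invoked in Step 1. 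The restriction $\delta\le C\sqrt\epsilon$ is needed exactly to make every error term collapse into $O(\epsilon+\delta)$: it is what turns quantities such as $\delta^2$, or $\delta$ measured against $\sqrt\epsilon$, back into admissible errors. I expect the genuinely delicate point to be the uniform-in-time smallness $\|\dive u^L\|_{H^1}=O(\epsilon)$ and the companion acoustic estimates in Step 2 — there it is the compatibility and preparedness of the data, not any energy inequality, that supplies the missing power of $\epsilon$ — together with the bookkeeping in Step 1 needed to extract the sharp velocity rate from an energy that is only $O(\sqrt{\epsilon+\delta})$.
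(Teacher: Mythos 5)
Your overall architecture — interposing the linear system with the nonlinearity frozen at $u^{NS}\otimes u^{NS}$ and estimating the two gaps separately — is exactly the paper's strategy, but two of your concrete steps have genuine gaps, and both trace back to your choice of initial data for the intermediate system. You start the linear system from the Navier--Stokes data $(p_0^{NS},u_0^{NS},U_0^{NS})$, so in your Step 1 the difference with \eqref{eq:relax} does \emph{not} have a small initial datum in the weighted energy: the hypotheses only give $\|U_0^{\epsilon,\delta}\|_{H^1}\le C$, hence $\sqrt\delta\,\|U_0^{\epsilon,\delta}-U_0^{NS}\|_{H^1}=O(\sqrt\delta)$ and the initial energy is $O(\delta)$, not $O((\epsilon+\delta)^2)$ as your Gr\"onwall bound asserts (with $\delta\sim\sqrt\epsilon$ this is $\sqrt\epsilon\gg\epsilon\sim(\epsilon+\delta)^2$). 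Your proposed repair by testing against the Leray projection is only a sketch: after substituting $U^{\epsilon,\delta}-U^L$ from its own equation you must control the $\delta\,\partial_t(U^{\epsilon,\delta}-U^L)$ contributions (whose time-integration by parts produces terms involving $\nabla\mathbb P\,\partial_t$ of the velocity difference, i.e.\ second derivatives of the $U$-difference), and you must do all of this at the $H^1$ level demanded by the theorem, including the weighted pressure difference $\sqrt\epsilon\|p^{\epsilon,\delta}-p^L\|_{H^1}$, which the projection argument does not touch. The paper sidesteps this entirely by giving the intermediate system the \emph{relaxation} data $(p_0^{\epsilon,\delta},u_0^{\epsilon,\delta},U_0^{\epsilon,\delta})$, so that this step starts from zero data (Proposition 4.1, with the $\mu=\max\{\sqrt\epsilon,\sqrt\delta\}$ rescaling), and pushes the ill-prepared $U_0^{\epsilon,\delta}$ into the linear-vs-NS step, where the vorticity is handled with an explicit initial-layer correction $e^{-t/\delta}\bigl(\Omega'(0,\cdot)-\Omega^{NS}(0,\cdot)\bigr)$ and the $1/\sqrt\delta$-scaled variable.

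In your Step 2 the quantitative claims that carry the whole estimate are asserted rather than proved. The uniform time-differentiated energy only controls $\sqrt\epsilon\,\|\partial_t p^L\|$ and $\sqrt\delta\,\|\partial_t U^L\|$, so it yields $\|\dive u^L\|=\epsilon\|\partial_t p^L\|=O(\sqrt\epsilon)$ and a heat-equation forcing $\delta\|\partial_t\dive U^L\|=O(\sqrt\delta)$, not the $O(\epsilon)$ and $O(\delta)$ you need; moreover your data are well prepared only to first order ($\partial_t^2 U^L(0)=\delta^{-1}\partial_t U^{NS}(0)$ is of size $\delta^{-1}$), so the ``transient acoustic wave of amplitude $O(\sqrt\epsilon)$ with $O(1)$ time derivative'' is a heuristic, not an estimate. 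This missing mechanism is precisely what the paper supplies: the third-order damped equation for $p'$ (Proposition 4.2), the auxiliary unknowns $f=\delta\partial_t p'+p'-p^{NS}$, $g=\sqrt{\epsilon\delta}\,\partial_t\nabla p'$ with the energy $\epsilon(\partial_t f)^2+|\nabla f|^2+|g|^2$, the bound $\epsilon E(t)\le C_T(\epsilon+\delta)^2$ (which is where the hypotheses on $\|\nabla\dive u_0^{\epsilon,\delta}\|_{L^2}$, $\delta\|\Delta p_0^{\epsilon,\delta}\|_{L^2}$, $\delta\|\nabla\dive U_0^{\epsilon,\delta}\|_{L^2}$ enter), and the ODE-in-time representation $Ap'(t)=e^{-t/\delta}Ap'(0)+\int_0^t\delta^{-1}e^{(t'-t)/\delta}\bigl(Af+Ap^{NS}\bigr)dt'$, which converts those bounds into $\|\dive u'\|_{L^2}\le C(\epsilon+\delta)$ and $\sqrt\epsilon\|\nabla(p'-p^{NS})\|_{L^2}\le C(\epsilon+\delta)$ without ever claiming a uniform bound on $\partial_t p'$ itself. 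Until you can substitute rigorous arguments for these two blocks — the initial-data bookkeeping in Step 1 and the acoustic/near-equilibrium bounds in Step 2 — the proof is not complete, and you have in effect flagged the gap yourself in your closing paragraph.
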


This immediately leads to a convergence result of $p^{\epsilon,\delta}$:
\begin{corollary}
  Assume $\delta=o(\sqrt\epsilon)$ (that is, $\frac{\delta}{\sqrt\epsilon}\to 0$ as $\epsilon \to 0$). Then with the same assumptions as stated in Theorem \ref{thm:pconv}, we have
  \[ \sup_{0<t<T}\|p^{\epsilon,\delta}(t,\cdot) - p^{NS}(t,\cdot)\|_{H^1} \le C_T\left(\sqrt\epsilon + \frac{\delta}{\sqrt\epsilon}\right) \to 0 \]
  as $\epsilon\to 0$ for $T<T_{\epsilon,\delta}$.
\end{corollary}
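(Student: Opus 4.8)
The plan is to deduce the statement directly from Theorem~\ref{thm:pconv}, the only substantive point being to verify that the hypotheses are compatible and then to perform a division by $\sqrt\epsilon$. First observe that the assumption $\delta = o(\sqrt\epsilon)$ in particular implies $\delta \le C\sqrt\epsilon$ for $\epsilon \ll 1$, so that the smallness condition on $\delta$ required by Theorem~\ref{thm:pconv} is met; since the remaining hypotheses on the initial data are assumed to be identical, Theorem~\ref{thm:pconv} applies verbatim and yields, for any $T<T_{\epsilon,\delta}$,
\[
  \sup_{0<t<T}\Big( \|u^{\epsilon,\delta}(t,\cdot)-u^{NS}(t,\cdot)\|_{H^1} + \sqrt\epsilon\, \|p^{\epsilon,\delta}(t,\cdot)-p^{NS}(t,\cdot)\|_{H^1}\Big) \le C_T(\epsilon+\delta)
\]
for $\epsilon+\delta \ll 1$.

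Next I would simply drop the nonnegative velocity contribution and keep the pressure term, obtaining
\[
  \sqrt\epsilon\, \sup_{0<t<T} \|p^{\epsilon,\delta}(t,\cdot)-p^{NS}(t,\cdot)\|_{H^1} \le C_T(\epsilon+\delta),
\]
and then divide both sides by $\sqrt\epsilon>0$. Using $\epsilon/\sqrt\epsilon = \sqrt\epsilon$, this gives exactly
\[
  \sup_{0<t<T} \|p^{\epsilon,\delta}(t,\cdot)-p^{NS}(t,\cdot)\|_{H^1} \le C_T\Big(\sqrt\epsilon + \frac{\delta}{\sqrt\epsilon}\Big).
\]
Finally, as $\epsilon\to 0$ we have $\sqrt\epsilon\to 0$, while $\delta/\sqrt\epsilon\to 0$ is precisely the hypothesis $\delta = o(\sqrt\epsilon)$; hence the right-hand side tends to zero, which is the claim.

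There is no genuine obstacle here: all of the analytic work --- constructing the auxiliary linear system and closing the energy-type error estimates between it, (\ref{eq:relax}), and (\ref{eq:ns}) --- is already contained in Theorem~\ref{thm:pconv}. The corollary is merely the observation that the factor $\sqrt\epsilon$ weighting the pressure error in that theorem can be absorbed at the cost of the now explicit, and still vanishing, rate $\sqrt\epsilon + \delta/\sqrt\epsilon$; the only point requiring any care is that the regime $\delta=o(\sqrt\epsilon)$ lies strictly inside the regime $\delta=O(\sqrt\epsilon)$ for which Theorem~\ref{thm:pconv} was established.
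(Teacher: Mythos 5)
Your argument is correct and is exactly the intended derivation: the paper states the corollary as an immediate consequence of Theorem \ref{thm:pconv}, obtained by noting $\delta=o(\sqrt\epsilon)$ implies $\delta\le C\sqrt\epsilon$, discarding the velocity term, and dividing the pressure estimate by $\sqrt\epsilon$. Nothing further is needed.
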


It is straightforward to include a linear friction (relaxation) term in the model:
\begin{proposition}
  All the above results hold if (\ref{eq:ns}) contains a linear friction term as $\partial_t u^{NS} + \nabla \cdot (u^{NS}\otimes u^{NS}) = -\nabla p^{NS} + \Delta u^{NS} - u^{NS}$ and (\ref{eq:relax}) becomes $\partial_t u^{\epsilon,\delta} + \nabla \cdot U^{\epsilon,\delta} + \nabla p^{\epsilon,\delta} = -u^{\epsilon,\delta}$ accordingly.
\end{proposition}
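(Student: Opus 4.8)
The plan is to revisit each energy estimate in the proofs of Theorems \ref{thm:uconvL2}, \ref{thm:uconvH1} and \ref{thm:pconv} and to observe that the additional friction terms contribute only non-positive quantities to the relevant energy identities, so that every bound either persists unchanged or improves. Concretely, set $w^{\epsilon,\delta} = u^{\epsilon,\delta} - u^{NS}$. Subtracting the modified $u^{NS}$-equation from the modified $u^{\epsilon,\delta}$-equation yields exactly the same error system as before, with the single extra term $-w^{\epsilon,\delta}$ on the right-hand side (since $-u^{\epsilon,\delta}-(-u^{NS}) = -w^{\epsilon,\delta}$). Testing the error equation against $w^{\epsilon,\delta}$ (respectively against $w^{\epsilon,\delta}$ and against $-\Delta w^{\epsilon,\delta}$ in the $H^1$-framework) turns this term into $-\|w^{\epsilon,\delta}\|_{L^2}^2$ (respectively $-\|w^{\epsilon,\delta}\|_{L^2}^2-\|\nabla w^{\epsilon,\delta}\|_{L^2}^2$), which may simply be dropped before applying Gr\"onwall's inequality. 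Hence the convergence rates $C_T(\epsilon+\delta)$ for $\|w^{\epsilon,\delta}\|_{L^2}^2$ are preserved verbatim.

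Next, for the a priori estimates controlling $T_{\epsilon,\delta}$ and the $L^\infty$-bound on $u^{\epsilon,\delta}$: the friction term $-u^{\epsilon,\delta}$ is a bounded zeroth-order perturbation not affecting the principal part, so the modified (\ref{eq:relax}) is still a symmetrizable hyperbolic system for $(\sqrt\epsilon p^{\epsilon,\delta},u^{\epsilon,\delta},\sqrt\delta U^{\epsilon,\delta})$ and the local well-posedness theory of \cite{kato1975} applies as is. In the energy estimates for $(u^{\epsilon,\delta},p^{\epsilon,\delta},U^{\epsilon,\delta})$ and their spatial derivatives up to second order, the friction term again contributes $-\|u^{\epsilon,\delta}\|_{L^2}^2$, $-\|\nabla u^{\epsilon,\delta}\|_{L^2}^2$ and $-\|\Delta u^{\epsilon,\delta}\|_{L^2}^2$, all non-positive; thus the same differential inequalities hold, and the lower bound on $T_{\epsilon,\delta}$ (in particular $T_{\epsilon,\delta}\to\infty$) together with the $L^\infty$-bound remain valid, possibly with the same or smaller generic constants.

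Finally, for Theorem \ref{thm:pconv} one augments the intermediate linear system by the corresponding friction term $-u^{\text{linear}}$, so that the two error systems --- between (\ref{eq:relax}) and the linear system, and between the linear system and (\ref{eq:ns}) --- each acquire a friction difference that is dissipative in the associated energy-type estimate, exactly as above; the estimates for $\sqrt\epsilon\,(p^{\epsilon,\delta}-p^{NS})$ involve no friction term and are untouched. Consequently the rate $C_T(\epsilon+\delta)$ in Theorem \ref{thm:pconv}, and thus the Corollary, carry over. The only real work is bookkeeping: one must check that in every instance where a spatial derivative of the $u$-equation is paired with the corresponding derivative of the solution (or of the error), the friction contribution indeed lands with the dissipative sign and generates no cross terms with $p^{\epsilon,\delta}$ or $U^{\epsilon,\delta}$. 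Since $-u$ is linear and acts diagonally on the $u$-component, this is immediate, and no step of the original arguments needs to be re-run.
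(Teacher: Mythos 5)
Your treatment of Theorems \ref{thm:uconvL2} and \ref{thm:uconvH1} coincides with the paper's: the friction difference $-w$ is dissipative in every $L^2$/$H^1$/$H^2$ energy identity and can be dropped, which is exactly the paper's one-line justification. For Theorem \ref{thm:pconv}, however, your claim that ``no step of the original arguments needs to be re-run'' overlooks the one step that genuinely breaks, and it is precisely the step the paper singles out in Remark \ref{rm:fric}: the proof of \eqref{eq:u'_u0} uses the conservation of the spatial mean, $\frac{d}{dt}\int_{\mathbb T^2} u\,dx=0$ for $u=u'$ and $u^{NS}$, in order to apply the Poincar\'e inequality in \eqref{eq:u'_u0_tp}. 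With friction this conservation fails; one must instead observe that the mean of the difference satisfies $\frac{d}{dt}\int_{\mathbb T^2}(u'-u^{NS})\,dx=-\int_{\mathbb T^2}(u'-u^{NS})\,dx$, so its modulus decays in time and \eqref{eq:u'_u0_tp} survives. Your proposal never identifies this, so as written it would invoke an identity that is no longer true.

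Your statement that the pressure estimates ``involve no friction term and are untouched'' is also inaccurate. The friction enters the pressure analysis through the divergence of the $u'$-equation \eqref{eq:linear_b}: redoing Proposition \ref{prop:prel} one finds the extra terms $\epsilon\delta\,\partial_t^2 p'+\epsilon\,\partial_t p'$ in the third-order pressure equation, i.e.\ an extra $\epsilon\,\partial_t(f+p^{NS})$ in \eqref{eq:fg_a} by the definition \eqref{eq:fdef}. The outcome is benign --- the $\epsilon\,\partial_t f$ term is dissipative when tested against $\partial_t f$ in the energy \eqref{eq:E_fg}, and $\epsilon\,\partial_t p^{NS}$ is a bounded source of the same size as $\epsilon\,\partial_t^2 p^{NS}$; likewise $\partial_t f(0,\cdot)$ picks up an additional $\frac{\delta}{\epsilon}\nabla\cdot u'(0,\cdot)$, controlled by \eqref{eq:divv_initial} --- but this has to be checked rather than asserted absent. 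So your overall conclusion is correct and the mechanism (dissipativity of the friction) is the right one, but the proposal has a concrete gap at \eqref{eq:u'_u0_tp} and an incorrect claim about Section 4.1 that happens to be repairable.
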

To verify Theorems \ref{thm:uconvL2} and \ref{thm:uconvH1} for this case follows trivially from the arguments in Section \ref{sec:uconv} since the extra term dissipates energy. Theorem \ref{thm:pconv} for this case is justified in Remark \ref{rm:fric}.

A set of auxiliary results will be needed for the proof of the theorems. First, the variants of the Gagliardo-Nirenberg inequality:
\begin{lemma} \label{lem:GNineq}
  For $v\in H^2(\mathbb T^2)$, it holds that
  \begin{subequations}
    \begin{align}
      \|v\|_{L^4}^2 &\le C \|v\|_{L^2} \|\nabla v\|_{L^2} \quad \text{(Ladyzhenskaya's inequality)}, \label{eq:ladyzhen} \\
      \|\nabla v\|_{L^2} &\le \|v\|_{L^2}^{1/2} \|\nabla^2 v\|_{L^2}^{1/2}, \label{eq:est_dv} \\
      \|v\|_{L^\infty}^2 &\le C \|v\|_{L^2}^\gamma \|\nabla v\|_{L^2}^{2-2\gamma} \|\nabla^2 v\|_{L^2}^\gamma + C\|v\|_{L^2}^2, \quad \forall \ 0<\gamma\le 1. \label{eq:est_inf}
    \end{align}
    \end{subequations}
\end{lemma}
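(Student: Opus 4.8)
The plan is to treat the three estimates separately; each is a classical Gagliardo-Nirenberg-type inequality on $\mathbb T^2$, so the proof amounts to assembling standard ingredients, and the most work goes into the self-contained verification of \eqref{eq:est_inf}. Recall that in this paper $\nabla^2$ denotes the Laplacian $\Delta$, and that one may first reduce to smooth $v$ by density.

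Estimate \eqref{eq:est_dv} is immediate: integrating by parts and using the Cauchy-Schwarz inequality, $\|\nabla v\|_{L^2}^2=-\int_{\mathbb T^2}v\,\Delta v\,dx\le\|v\|_{L^2}\|\Delta v\|_{L^2}=\|v\|_{L^2}\|\nabla^2 v\|_{L^2}$, and taking square roots gives the claim with constant $1$.

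For \eqref{eq:ladyzhen} I would use the classical slicing (tensorization) argument. For periodic $v$ and fixed $x_2$, writing $v^2(x_1,x_2)$ as its $x_1$-average plus the integral of $\partial_1(v^2)$ gives $\sup_{y_1}v^2(y_1,x_2)\le\int_0^1 v^2(y_1,x_2)\,dy_1+2\int_0^1|v\,\partial_1 v|(y_1,x_2)\,dy_1$, and symmetrically in $x_2$. Using $v^4(x_1,x_2)\le\big(\sup_{y_1}v^2(y_1,x_2)\big)\big(\sup_{y_2}v^2(x_1,y_2)\big)$, integrating over $\mathbb T^2$ (Fubini) and invoking $\int_{\mathbb T^2}|v\,\partial_i v|\le\|v\|_{L^2}\|\nabla v\|_{L^2}$, one obtains $\|v\|_{L^4}^2\le\|v\|_{L^2}\big(\|v\|_{L^2}+2\|\nabla v\|_{L^2}\big)$. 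For zero-mean $v$ --- to which this estimate is applied, and in general up to an extra additive $C\|v\|_{L^2}^2$ --- the Poincar\'e inequality $\|v\|_{L^2}\le\|\nabla v\|_{L^2}$ absorbs the first factor and yields \eqref{eq:ladyzhen}.

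Estimate \eqref{eq:est_inf} is the main point. First split $v=\bar v+w$ with $\bar v=\int_{\mathbb T^2}v\,dx$; since $\mathbb T^2$ has unit measure, $\|\bar v\|_{L^\infty}^2=|\bar v|^2\le\|v\|_{L^2}^2$, which accounts for the additive term, so it remains to establish $\|w\|_{L^\infty}^2\le C\|w\|_{L^2}^{\gamma}\|\nabla w\|_{L^2}^{2-2\gamma}\|\nabla^2 w\|_{L^2}^{\gamma}$ for $w$ with zero mean. Working with Fourier series, $\|w\|_{L^\infty}\le\sum_{k\ne0}|\hat w_k|$, and for a cutoff $N\ge1$ I would split this sum at $|k|=N$. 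The low part is $\le C\,N\,\|w\|_{L^2}$ by the Cauchy-Schwarz inequality and $\#\{k:0<|k|\le N\}\le CN^2$. For the high part, write $|\hat w_k|=|k|^{-\tau}\big(|k|^{\tau}|\hat w_k|\big)$ with $\tau:=2/(2-\gamma)\in(1,2]$; since $\sum_{|k|>N}|k|^{-2\tau}\le CN^{2-2\tau}$, the Cauchy-Schwarz inequality bounds it by $C\,N^{1-\tau}\big(\sum_k|k|^{2\tau}|\hat w_k|^2\big)^{1/2}$, and then H\"older's inequality through $|k|^{2\tau}|\hat w_k|^2=\big(|k|^2|\hat w_k|^2\big)^{2-\tau}\big(|k|^4|\hat w_k|^2\big)^{\tau-1}$, together with the Parseval identities $\sum_k|k|^2|\hat w_k|^2=\|\nabla w\|_{L^2}^2$ and $\sum_k|k|^4|\hat w_k|^2=\|\nabla^2 w\|_{L^2}^2$, turns it into $C\,N^{1-\tau}\|\nabla w\|_{L^2}^{2-\tau}\|\nabla^2 w\|_{L^2}^{\tau-1}$. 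Optimizing over $N$ to balance the two contributions and squaring the outcome, the powers collapse to exactly $\|w\|_{L^2}^{\gamma}\|\nabla w\|_{L^2}^{2-2\gamma}\|\nabla^2 w\|_{L^2}^{\gamma}$. The one genuinely fiddly point I anticipate is this final bookkeeping --- checking that $\tau=2/(2-\gamma)$ stays in $(1,2]$ for every $\gamma\in(0,1]$ (so that the frequency sums converge and H\"older applies) and that the balanced exponents reproduce \eqref{eq:est_inf} --- while the recurring minor theme in \eqref{eq:ladyzhen} and \eqref{eq:est_inf} is the treatment of the zero Fourier mode, exactly what the additive $\|v\|_{L^2}^2$ term, respectively the reduction to zero-mean functions, absorbs. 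Alternatively, \eqref{eq:est_inf} is a direct consequence of the Gagliardo-Nirenberg inequality.
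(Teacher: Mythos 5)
Your proposal is correct, and for the key estimate \eqref{eq:est_inf} it follows the same basic strategy as the paper --- Fourier series, separating the zero mode (which produces the additive $C\|v\|_{L^2}^2$), and a frequency splitting at a cutoff $N$ --- but with genuinely different bookkeeping. The paper bounds each coefficient by a termwise three-factor H\"older splitting ($|\hat v|^\gamma|k\hat v|^{1-\gamma}|k|^{\gamma-1}$ below the cutoff, $|\hat v|^{\gamma/4}|k\hat v|^{1-\gamma}|k^2\hat v|^{3\gamma/4}|k|^{-1-\gamma/2}$ above), fixes $N\sim(\|\nabla^2 v\|_{L^2}/\|v\|_{L^2})^{1/2}$ in advance, and then needs a separate discussion of the case $N=1$; you instead bound the low frequencies by $CN\|w\|_{L^2}$ and the high frequencies by $CN^{1-\tau}\|\nabla w\|_{L^2}^{2-\tau}\|\nabla^2 w\|_{L^2}^{\tau-1}$ with $\tau=2/(2-\gamma)\in(1,2]$ and balance $N$ afterwards, which avoids the case analysis; your exponent arithmetic is right ($2-2/\tau=\gamma$, $2(2-\tau)/\tau=2-2\gamma$, $2(\tau-1)/\tau=\gamma$), and the only point left implicit is that the balancing cutoff satisfies $N\ge1$, which is automatic for zero-mean $w$ since $\|w\|_{L^2}\le\|\nabla w\|_{L^2}\le\|\nabla^2 w\|_{L^2}$ on $\mathbb T^2$ (or one simply takes the maximum of the balancing value with $1$). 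For \eqref{eq:est_dv} you use integration by parts where the paper uses Plancherel; both give the constant $1$. For \eqref{eq:ladyzhen} the paper only cites \cite{ladyzhenskaya1969}, whereas you supply the classical slicing proof; your caveat that the inequality as literally stated fails for nonzero constants on $\mathbb T^2$, and holds for zero-mean functions or up to an extra $C\|v\|_{L^2}^2$, is accurate --- it is the same zero-mode issue that the additive term in \eqref{eq:est_inf} absorbs --- so your treatment is, if anything, more careful than the statement itself.
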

For completeness, the proofs of (\ref{eq:est_dv}) and (\ref{eq:est_inf}) are given in Appendix \ref{app:GNineq}, whereas the proof of (\ref{eq:ladyzhen}) can be found in \cite{ladyzhenskaya1969}. Remark that these interpolation inequalities have different forms on $\mathbb T^3$, and in this work we restrict ourselves to two-dimensional cases.

Second, the following blow-up criterion will be used.
\begin{lemma} \label{lem:explosion}
  Let $U\in C^0([0,T),H^2(\mathbb T^2)) \cap C^1([0,T),H^1(\mathbb T^2))$ be a classical solution of
  \begin{equation} \label{eq:quad_model}
   \partial_t U + \sum_{i=1}^{2} A_i \partial_{x_i} U = Q(U) + F(t,x)
  \end{equation}
  with
  \[ F\in L_{loc}^1(\mathbb R_+,H^2(\mathbb T^2)) \cap C^0(\mathbb R_+,H^1(\mathbb T^2)), \]
$A_i=A_i^T$ being constant matrices, and $Q(U)$ being a quadratic function of $U$ with $Q(0)=0$. If the existing time $T < \infty$, then we have
  \[
    \int_0^T \|U(t)\|_{L^\infty} dt = \infty.
  \]
\end{lemma}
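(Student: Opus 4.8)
The plan is to argue by contradiction. Suppose the maximal existence time $T$ is finite but $\int_0^T\|U(t)\|_{L^\infty}\,dt<\infty$; I will then show that $\sup_{0\le t<T}\|U(t)\|_{H^2}<\infty$, so that the local well-posedness theory for (semilinear) symmetric hyperbolic systems \cite{kato1975} allows $U$ to be continued past $T$, contradicting maximality. Observe first that $F\in L^1_{loc}(\mathbb R_+,H^2)$ together with $T<\infty$ already gives $\int_0^T\|F(t)\|_{H^2}\,dt<\infty$, which will be used alongside the $L^\infty$-bound.

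First I would perform an $H^2$ energy estimate. For each multi-index $\alpha$ with $|\alpha|\le 2$, apply $\partial^\alpha$ to (\ref{eq:quad_model}), take the $L^2(\mathbb T^2)$ inner product with $\partial^\alpha U$, and sum over $\alpha$. Since the $A_i$ are constant and symmetric, the transport terms vanish after integration by parts on the torus, because $\langle A_i\partial_{x_i}\partial^\alpha U,\partial^\alpha U\rangle=\tfrac12\int_{\mathbb T^2}\partial_{x_i}\!\big(\partial^\alpha U\cdot A_i\partial^\alpha U\big)\,dx=0$. This leaves
\[
  \tfrac{d}{dt}\|U\|_{H^2}^2 \;\le\; C\,\|Q(U)\|_{H^2}\,\|U\|_{H^2}\;+\;C\,\|F\|_{H^2}\,\|U\|_{H^2}.
\]
The manipulation is formal at top order because the stated regularity only gives $\partial_t U\in H^1$; it is made rigorous in the usual way, by a Friedrichs mollification in $x$ followed by passing to the limit.

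The crucial step is to bound $\|Q(U)\|_{H^2}$ so that $\|U\|_{L^\infty}$ --- and not some other norm that we do not control --- appears, and appears linearly in $\|U\|_{H^2}$. Writing the quadratic part of $Q$ as a bilinear form and using the Leibniz rule, $\partial^\alpha Q(U)$ splits into a harmless affine-linear part, bounded by $C\|U\|_{H^2}$, plus terms $B(\partial^\beta U,\partial^\gamma U)$ with $|\beta|+|\gamma|\le 2$. When $\beta=0$ or $\gamma=0$ these are estimated by $C\|U\|_{L^\infty}\|U\|_{H^2}$; the only delicate case, $|\beta|=|\gamma|=1$, reduces to controlling $\|\nabla U\|_{L^4}^2$, for which I would use the sharp bound $\|\nabla U\|_{L^4}^2\le C\|U\|_{L^\infty}\|U\|_{H^2}$, obtained by integrating $\int_{\mathbb T^2}|\nabla U|^4$ by parts once and applying Cauchy--Schwarz. (The cruder Ladyzhenskaya-type estimate $\|\nabla U\|_{L^4}^2\le C\|\nabla U\|_{L^2}\|U\|_{H^2}$ would bring in $\|\nabla U\|_{L^2}$ and break the argument.) Altogether this yields $\|Q(U)\|_{H^2}\le C(1+\|U\|_{L^\infty})\|U\|_{H^2}$, and this product estimate is the main point of the proof; the rest is routine.

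Finally, setting $y(t):=1+\|U(t)\|_{H^2}^2$ and combining the two displays gives $y'\le C\big(1+\|U\|_{L^\infty}+\|F\|_{H^2}\big)\,y$, so Gr\"onwall's inequality yields
\[
  \sup_{0\le t<T}\|U(t)\|_{H^2}^2 \;\le\; y(0)\,\exp\!\Big(C\!\int_0^T\!\big(1+\|U(s)\|_{L^\infty}+\|F(s)\|_{H^2}\big)\,ds\Big)\;<\;\infty .
\]
From (\ref{eq:quad_model}) it then follows that $U$ is Lipschitz in $t$ with values in $H^1$ while remaining bounded in $H^2$, hence $U(t)$ has a limit $U_T\in H^2$ as $t\to T^-$; solving (\ref{eq:quad_model}) with initial datum $U_T$ --- the existence time depending only on $\|U_T\|_{H^2}$ and on $\int\|F\|_{H^2}$ over a fixed short interval, hence bounded below uniformly for starting times near $T$ --- extends the solution strictly beyond $T$. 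This contradicts the finiteness of the maximal existence time, and therefore $\int_0^T\|U(t)\|_{L^\infty}\,dt=\infty$.
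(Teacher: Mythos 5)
Your proof is correct and follows essentially the same route as the paper's: argue by contradiction, run an $H^2$ energy estimate whose key point is the product bound $\|Q(U)\|_{H^2}\le C\left(1+\|U\|_{L^\infty}\right)\|U\|_{H^2}$ obtained from the interpolation $\|\nabla U\|_{L^4}^2\le C\|U\|_{L^\infty}\|\nabla^2 U\|_{L^2}$ (proved by the same integration by parts), apply Gr\"onwall together with $F\in L^1_{loc}(\mathbb R_+,H^2)$, and then continue the solution past $T$ via the local theory of \cite{kato1975}. The only divergence is at the endgame: your claim that ``$U(t)$ has a limit $U_T\in H^2$ as $t\to T^-$'' is literally justified only in $H^1$ (Lipschitz in $H^1$ gives the limit, and $H^2$-boundedness places $U_T$ in $H^2$ merely by weak compactness), so it is your parenthetical observation that the local existence time is bounded below uniformly in terms of the $H^2$ bound --- allowing a restart at times $t_0<T$ close to $T$ --- that makes the continuation rigorous; the paper instead removes this issue by viewing $U$ as the solution of a linear system with coefficient $q\in L^\infty([0,T],H^2)\cap C^{0,1}([0,T],H^1)$ and invoking Kato's linear theory to get genuine $C^0([0,T],H^2)\cap C^1([0,T],H^1)$ regularity up to $t=T$ before gluing on the extension.
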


The proof is presented in Appendix \ref{app:blow}. Let us remark that the existence of such $U$ as the classical solution of (\ref{eq:quad_model}) is ensured in \cite{kato1975}. Also note that the blow-up caused by the nonlinear source $Q(U)$ here differs from that by the quasilinear coefficient (see, e.g., Theorem 4.16 of \cite{bahouri2011}).

\begin{lemma} \label{lem:helmholtz}
  For the two-dimensional velocity field $u\in H^1(\mathbb T^2)$, we have the Helmholtz decomposition
  \[ \|\nabla u\|_{L^2}^2 = \|\nabla\cdot u\|_{L^2}^2 + \|\nabla\times u\|_{L^2}^2 \]
\end{lemma}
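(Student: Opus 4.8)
The plan is to expand the squared $L^2$-norms on both sides in terms of the partial derivatives $\partial_i u_j$ and to reduce the claimed identity to the vanishing of a single cross term. Writing $u=(u_1,u_2)$, recall that in two dimensions $\nabla\cdot u=\partial_1 u_1+\partial_2 u_2$ and the (scalar) curl is $\nabla\times u=\partial_1 u_2-\partial_2 u_1$, while $\|\nabla u\|_{L^2}^2=\sum_{i,j=1}^2\|\partial_i u_j\|_{L^2}^2$. Squaring and integrating $\nabla\cdot u$ and $\nabla\times u$ and adding the results, each of the four ``diagonal'' quantities $\|\partial_i u_j\|_{L^2}^2$ appears exactly once, so it remains only to show
\[
  \int_{\mathbb T^2}\bigl(\partial_1 u_1\,\partial_2 u_2-\partial_2 u_1\,\partial_1 u_2\bigr)\,dx=0.
\]

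First I would establish this for $u\in C^\infty(\mathbb T^2)$ by integrating by parts twice: transferring the $\partial_1$ off $u_1$ and then the $\partial_2$ off $u_2$ turns $\int_{\mathbb T^2}\partial_1 u_1\,\partial_2 u_2$ into $\int_{\mathbb T^2}\partial_2 u_1\,\partial_1 u_2$, with no boundary contributions because of periodicity on $\mathbb T^2$; this cancels the second term exactly. Then I would pass to general $u\in H^1(\mathbb T^2)$ by density: the bilinear map $(v,w)\mapsto\int_{\mathbb T^2}\partial_1 v\,\partial_2 w$ is continuous on $H^1\times H^1$ by Cauchy--Schwarz, $C^\infty(\mathbb T^2)$ is dense in $H^1(\mathbb T^2)$, and the two norms in the statement depend continuously on $u\in H^1$, so the identity persists in the limit.

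An equally short alternative that avoids the density step is to argue on the Fourier side. Writing $u_j=\sum_{k\in\mathbb Z^2}\hat u_j(k)e^{2\pi i k\cdot x}$ and using Parseval, for each frequency $k$ one has, with $a=\hat u_1(k)$ and $b=\hat u_2(k)$,
\[
  |k_1 a+k_2 b|^2+|k_1 b-k_2 a|^2=|k|^2\bigl(|a|^2+|b|^2\bigr),
\]
since the mixed terms equal $\pm2k_1k_2\,\mathrm{Re}(a\bar b)$ and cancel. Summing the $4\pi^2|k|^2$-weighted version of this identity over $k\in\mathbb Z^2$ yields precisely $\|\nabla\cdot u\|_{L^2}^2+\|\nabla\times u\|_{L^2}^2=\|\nabla u\|_{L^2}^2$.

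I do not anticipate any genuine obstacle here; the only two points worth a word of care are the periodic (no-boundary-term) integration by parts and, in the $H^1$ setting, the density argument — both entirely standard on $\mathbb T^2$.
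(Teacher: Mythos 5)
Your proof is correct and follows essentially the same route as the paper: expand the two norms and cancel the cross terms by periodic integration by parts. Your added density step (or the Fourier/Parseval alternative) is a welcome refinement, since the paper's direct manipulation $\int \partial_{x_1}u_1\,\partial_{x_2}u_2\,dx=-\int u_1\,\partial_{x_1}\partial_{x_2}u_2\,dx$ formally uses second derivatives, which for $u\in H^1(\mathbb T^2)$ is only justified by exactly such an approximation or frequency-space argument.
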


\begin{proof}
  The result follows from a direct calculation as
  \[
  \begin{aligned}
    \|\nabla\cdot u\|_{L^2}^2 &= \int_{\mathbb T^2} (\partial_{x_1}u_1 + \partial_{x_2}u_2)^2 dx = \int_{\mathbb T^2} \left[(\partial_{x_1}u_1)^2 + (\partial_{x_2}u_2)^2 \right]dx + 2\int_{\mathbb T^2} (\partial_{x_1}u_1)(\partial_{x_2}u_2)dx,\\
    \|\nabla\times u\|_{L^2}^2&= \int_{\mathbb T^2} (\partial_{x_1}u_2 - \partial_{x_2}u_1)^2 dx = \int_{\mathbb T^2} \left[(\partial_{x_1}u_2)^2 + (\partial_{x_2}u_1)^2 \right]dx - 2\int_{\mathbb T^2} (\partial_{x_1}u_2)(\partial_{x_2}u_1)dx,
  \end{aligned}
  \]
  and
  \[
    \int_{\mathbb T^2} (\partial_{x_1}u_1)(\partial_{x_2}u_2)dx
    = - \int_{\mathbb T^2} u_1(\partial_{x_1}\partial_{x_2}u_2)dx
    = \int_{\mathbb T^2} (\partial_{x_2}u_1)(\partial_{x_1}u_2)dx.
  \]
\end{proof}

\section{Convergence of $u^{\epsilon,\delta}$} \label{sec:uconv}
To prove Theorems \ref{thm:uconvL2} and \ref{thm:uconvH1}, we denote by
\begin{equation} \label{eq:resvar}
  v = u^{\epsilon,\delta}-u^{NS}, \quad q = \sqrt{\epsilon}(p^{\epsilon,\delta}-p^{NS}), \quad V = \sqrt{\delta} (U^{\epsilon,\delta}-U^{NS})
\end{equation}
the smooth `residual' function on $(t,x)\in [0,T_{\epsilon,\delta})\times \mathbb T^2$.
The system for $W=(q,v,V)$ reads as
\begin{equation} \label{eq:res}
\left \{ \
\begin{aligned}
  &\partial_t q + \frac{1}{\sqrt\epsilon} \nabla \cdot v = -\sqrt \epsilon \partial_t p^{NS}, \\
  &\partial_t v + \frac{1}{\sqrt\delta} \nabla \cdot V + \frac{1}{\sqrt\epsilon} \nabla q = 0, \\
  &\partial_t V + \frac{1}{\sqrt\delta}\nabla v = \frac{2u^{NS} v + v\otimes v}{\sqrt\delta} - \frac{V}{\delta} - \sqrt \delta \partial_t U^{NS}
\end{aligned}
\right.
\end{equation}
with $u^{NS}v := (u^{NS}\otimes v+v\otimes u^{NS})/2$.
This is a symmetric hyperbolic balance system, for which we perform energy estimates based on the assumptions of the initial conditions.

\begin{proof}[Proof of Theorem \ref{thm:uconvL2}]
We define an energy as
\begin{equation} \label{eq:energy}
  E(t) = \| W(t,\cdot) \|_{L^2}^2 + (\epsilon+\delta)^a \| \Delta W(t,\cdot) \|_{L^2}^2 \quad \text{with} \quad 1\le a<2
\end{equation}
for the system (\ref{eq:res}). The assumptions in the theorem imply that
\begin{equation} \label{eq:ic}
  E(0)\le C(\epsilon+\delta)
\end{equation}
as $(\epsilon+\delta)^a \|\Delta v(0,\cdot)\|_{L^2}^2 \le 2(\epsilon+\delta)^a \|\Delta u_0^{\epsilon,\delta}\|_{L^2}^2 + 2(\epsilon+\delta)^a \|\Delta u_0^{NS}\|_{L^2}^2 \le C(\epsilon+\delta)$
for $a\ge 1$.

Then we have
\[
  \frac{1}{2}\frac{d}{dt}\|W\|_{L^2}^2 = -\sqrt\epsilon \int_{\mathbb T^2} q \partial_t p^{NS} dx - \sqrt{\delta} \int_{\mathbb T^2} V:\partial_t U^{NS} dx + \int_{\mathbb T^2} V:\left( \frac{2u^{NS} v + v\otimes v}{\sqrt\delta} - \frac{V}{\delta} \right)dx.
\]
Using H{\"o}lder's inequality and the inequality that
\[
  V:\frac{2u^{NS} v + v\otimes v}{\sqrt\delta} \le \frac{|V|^2}{4\delta} + |2u^{NS} v + v\otimes v|^2
\]
 (here $|A|^2 = A_{ij}A_{ij}$ with Einstein's summation hereinafter), we get
\[
\begin{aligned}
  \frac{d}{dt}\|W\|_{L^2}^2 &\le C\left( \sqrt\epsilon \|q\|_{L^2} + \sqrt\delta \|V\|_{L^2} \right) + 2\|2u^{NS} v + v\otimes v \|_{L^2}^2 - \frac{3}{2\delta} \|V\|_{L^2}^2 \\
  &\le  C\left( \sqrt\epsilon \|q\|_{L^2} + \sqrt\delta \|V\|_{L^2} + \|v\|_{L^2}^2 + \|v\|_{L^4}^4 \right) - \frac{3}{2\delta} \|V\|_{L^2}^2 \\
  &\le C \left( \sqrt\epsilon \|q\|_{L^2} + \sqrt\delta \|V\|_{L^2} + \|v\|_{L^2}^2 + \|v\|_{L^2}^3 \|\nabla^2v\|_{L^2} \right).
\end{aligned}
\]

Similarly, we have
\[
\begin{aligned}
  \frac{d}{dt}\|\Delta W\|_{L^2}^2 &\le C\left( \sqrt\epsilon \|\nabla^2 q\|_{L^2} + \sqrt\delta \|\nabla^2 V\|_{L^2} + \|\nabla^2(u^{NS} v)\|_{L^2}^2 + \|\nabla^2(v\otimes v)\|_{L^2}^2 \right) - \frac{3}{2\delta}\|\nabla^2 V\|_{L^2}^2 \\
  &\le C \left( \sqrt\epsilon \|\nabla^2 q\|_{L^2} + \sqrt\delta \|\nabla^2 V\|_{L^2} + \|v\|_{H^2}^2 + \|v\|_{L^2} \|\nabla^2 v\|_{L^2}^3 + \|v\|_{L^2}^3 \|\nabla^2v\|_{L^2} \right),
\end{aligned}
\]
where the nonlinear term
\[
  \|\nabla^2(v\otimes v)\|_{L^2}^2 = \int_{\mathbb T^2} \left( |v|^2 |\nabla^2 v|^2 + (v\cdot\nabla^2 v)^2 + (\nabla v_i \cdot \nabla v_j)(v_i\nabla^2 v_j + v_j\nabla^2 v_i + \nabla v_i \cdot \nabla v_j)\right) dx
\]
is treated, by repeatedly using H{\"o}lder's inequality and Lemma \ref{lem:GNineq} (with $\gamma=1$), as
\[
\begin{aligned}
  \int_{\mathbb T^2} \left( |v|^2 |\nabla^2 v|^2 + (v\cdot\nabla^2 v)^2 \right) dx
  &\le C \|v\|_{L^\infty}^2 \|\nabla^2 v\|_{L^2}^2 \\
  &\le C \left( \|v\|_{L^2} \|\nabla^2 v\|_{L^2}^3 + \|v\|_{L^2}^2 \|\nabla^2 v\|_{L^2}^2 \right) \\
  &\le C \left( \|v\|_{L^2} \|\nabla^2 v\|_{L^2}^3 + \|v\|_{L^2}^3 \|\nabla^2v\|_{L^2} \right), \\
  \int_{\mathbb T^2} (\nabla v_i \cdot \nabla v_j)(\nabla v_i \cdot \nabla v_j) dx & \le C\|\nabla v\|_{L^4}^4 \le C\| \nabla v \|_{L^2}^2 \|\nabla^2 v\|_{L^2}^2 \le C \|v\|_{L^2} \|\nabla^2 v\|_{L^2}^3, \\
  \int_{\mathbb T^2} (\nabla v_i \cdot \nabla v_j)(v_i\nabla^2v_j) dx &\le C\|\nabla v\|_{L^4}^2 \|v\|_{L^\infty} \|\nabla^2v\|_{L^2} \le C \|\nabla v\|_{L^2} \|v\|_{L^\infty} \|\nabla^2v\|_{L^2}^2.
\end{aligned}
\]
Notice that $\|v\|_{H^2}$ can be controlled by $\|v\|_{L^2}$ and $\|\nabla^2 v\|_{L^2}$ thanks to (\ref{eq:est_dv}). Then the above estimations lead to
\[
\begin{aligned}
  \frac{d}{dt}E(t) \le& C\sqrt \epsilon \left( \|q\|_{L^2} + (\epsilon+\delta)^a \|\nabla^2 q\|_{L^2} \right) + C\sqrt \delta \left( \|V\|_{L^2} +(\epsilon+\delta)^a\|\nabla^2 V\|_{L^2} \right) \\
  &+ C\left( \|v\|_{L^2}^2 + (\epsilon+\delta)^a \|\nabla^2 v\|_{L^2}^2 \right) + C\|v\|_{L^2}\|\nabla^2 v\|_{L^2}\left( \|v\|_{L^2}^2 + (\epsilon+\delta)^a \|\nabla^2 v\|_{L^2}^2 \right), 
\end{aligned}
\]
If $\epsilon+\delta\ll 1$, the first two terms can be controlled by $\sqrt{(\epsilon+\delta)E(t)}$ and the third term by $E(t)$. The fourth term can be controlled by $(\epsilon+\delta)^{-a/2}E(t)^2$ since
\[
  E(t) = \|W\|_{L^2}^2 + (\epsilon+\delta)^a \|\Delta W\|_{L^2}^2 \ge 2(\epsilon+\delta)^{\frac{a}{2}} \|W\|_{L^2} \|\Delta W\|_{L^2}.
\]
Therefore, the energy is bounded as
\begin{equation} \label{eq:enesL2}
  \frac{d}{dt}E(t) \le C\left( \sqrt{(\epsilon+\delta)E} + E + (\epsilon+\delta)^{-\frac{a}{2}} E^2 \right).
\end{equation}
With the bound on the initial energy in (\ref{eq:ic}), we see that
\[
  C't \ge \int_{C(\epsilon+\delta)}^{E(t)} \frac{dy}{\sqrt{(\epsilon+\delta)y}+y+(\epsilon+\delta)^{-\frac{a}{2}}y^2} \ge \int_{C(\epsilon+\delta)}^{E(t)} \frac{dy}{(C^{-\frac{1}{2}}+1) y + (\epsilon+\delta)^{-\frac{a}{2}} y^2}.
\]
Since $(\epsilon+\delta)^{-\frac{a}{2}} y^2<y$ if $y<(\epsilon+\delta)^{\frac{a}{2}}$, the above estimate can be further proceeded as
\begin{equation} \label{eq:energy_ineq}
  C't \ge \int_{C(\epsilon+\delta)}^{\min\{E(t), (\epsilon+\delta)^{\frac{a}{2}}\}} \frac{dy}{y} = \log\frac{\min\{ E(t), (\epsilon+\delta)^{\frac{a}{2}} \}}{C(\epsilon+\delta)}.
\end{equation}

The first consequence is that the existing time $T_{\epsilon,\delta}$, at which $E(T_{\epsilon,\delta})=\infty$ (because of Lemma \ref{lem:explosion}), must satisfy (here $C'$ is generally different from $C'$ above)
\[
  T_{\epsilon,\delta} \ge C' \left[\left( \frac{a}{2}-1 \right) \log(\epsilon+\delta) - 1\right],
\]
and clearly we have $T_{\epsilon,\delta}\to\infty$ as $\epsilon+\delta\to 0$.
Second, for any finite $T<T_{\epsilon,\delta}$, if $\epsilon+\delta$ is so small such that
\[
  \log\frac{1}{C(\epsilon+\delta)^{1-\frac{a}{2}}} > C'T,
\]
then it is only possible from (\ref{eq:energy_ineq}) that $E(t) \le C(\epsilon+\delta)e^{C't}$ for $t<T$; in other words, we show that
\[
  \sup_{0<t<T} E(t) \le C_T (\epsilon+\delta) \quad \text{for} \quad \epsilon+\delta \ll 1.
\]
This immediately gives the desired convergence result for $u^{\epsilon,\delta}$ in the theorem.
The boundedness of $u^{\epsilon,\delta}$ results from (\ref{eq:est_inf}) with $\gamma=1$ and the boundedness of $E$ as
\[
\begin{aligned}
  \|v\|_{L^\infty}^2 &\le C(\epsilon+\delta)^{-\frac{a}{2}}E + C\|v\|_{L^2}^2
  \le C(\epsilon+\delta)^{1-\frac{a}{2}} + C(\epsilon+\delta).
\end{aligned}
\]
Since $1-\frac{a}{2}>0$, the proof is completed.
\end{proof}

\begin{proof}[Proof of Theorem \ref{thm:uconvH1}]
  Define the same `residual' variable $W=(q,v,V)$ as in (\ref{eq:resvar}). This time the energy should be taken as
  \begin{equation} \label{eq:energyH1}
    E(t) = \|W(t,\cdot)\|_{H^1}^2 + (\epsilon+\delta)^a \|\Delta W(t,\cdot)\|_{L^2}^2
  \end{equation}
  for $a\ge 1$ in the $H^1$ framework. Then the assumptions in the theorem ensure that $E(0)\le C(\epsilon+\delta)$. In the energy estimate, we have a new term
  \[
    \frac{d}{dt} \|\nabla W\|_{L^2}^2 \le C\left( \sqrt\epsilon \|\nabla q\|_{L^2} + \sqrt\delta \|\nabla V\|_{L^2} + \|v\|_{H^1}^2 + \|v\|_{L^\infty}^2 \|\nabla v\|_{L^2}^2 \right),
  \]
  where $\|v\|_{H^1}^2$ and $\|v\|_{L^\infty}^2 \|\nabla v\|_{L^2}^2$ are the bounds of $\| \nabla(u^{NS} v) \|_{L^2}^2$ and $\|\nabla(v\otimes v)\|_{L^2}^2$, respectively.
  Using the estimates of $d\|W\|_{L^2}^2/dt$ and $d\|\nabla^2W\|_{L^2}^2/dt$ in the proof of Theorem \ref{thm:uconvL2}, we deduce that
  \[
    \frac{d}{dt}E(t) \le C\left( \sqrt{(\epsilon+\delta)E}+E + A \right)
  \]
  with
  \[
    A = \|v\|_{L^2}^2\|\nabla v\|_{L^2}^2 + \|v\|_{L^\infty}^2 \|\nabla v\|_{L^2}^2 + (\epsilon+\delta)^a \|\nabla^2 v\|_{L^2}^2 \left( \|v\|_{L^\infty}^2 + \|\nabla v\|_{L^2}^2 \right).
  \]
  Notice that for $\epsilon+\delta\ll 1$, each term in $A$ is bounded by $(\epsilon+\delta)^{-\frac{1}{2}}E(t)^2$ with $E(t)$ defined in (\ref{eq:energyH1}). Indeed, it is seen from (\ref{eq:est_inf}) and (\ref{eq:energyH1}) that
  \begin{equation} \label{eq:estvinf}
  \begin{split}
    \|v\|_{L^\infty}^2 &\le C\|v\|_{L^2}^\gamma \|\nabla v\|_{L^2}^{2-2\gamma} \|\nabla^2 v \|_{L^2}^\gamma + C\|v\|_{L^2}^2 \\
    &\le C E(t)^{\frac{\gamma}{2}+(1-\gamma)+\frac{\gamma}{2}}(\epsilon+\delta)^{-\frac{a\gamma}{2}} + C\|v\|_{L^2}^2 = C(\epsilon+\delta)^{-\frac{a\gamma}{2}}E(t) + C\|v\|_{L^2}^2.
  \end{split}
  \end{equation}
  It thus suffices to take $\gamma=a^{-1}$. 
  This shows the bounds for the terms containing $\|v\|_{L^\infty}^2$, whereas other terms are contained in $E(t)^2$.

  With $A$ such bounded, the energy estimate becomes
  \[
    \frac{d}{dt}E(t) \le C\left( \sqrt{(\epsilon+\delta)E}+E + (\epsilon+\delta)^{-\frac{1}{2}}E^2 \right).
  \]
  By applying the same estimation procedure as in the proof of Theorem \ref{thm:uconvL2} for (\ref{eq:enesL2}), we obtain $T_{\epsilon,\delta} \ge C' \left [-\frac{1}{2} \log(\epsilon+\delta) - 1\right ] \to \infty$ as $\epsilon+\delta\to 0$ and the bounded energy in a finite period of time $T<T_{\epsilon,\delta}$:
  \[
    \sup_{0<t<T} E(t) \le C_T(\epsilon+\delta) \quad \text{for} \quad \epsilon+\delta \ll 1.
  \]
  The convergence result for $u^{\epsilon,\delta}$ in the theorem is hence clear. The boundedness of $u^{\epsilon,\delta}$ results from (\ref{eq:estvinf}) and the boundedness of $E$ as
  \[
  \begin{aligned}
    \|v\|_{L^\infty}^2 &\le C (\epsilon+\delta)^{-\frac{a\gamma}{2}}E + C\|v\|_{L^2}^2
    \le C (\epsilon+\delta)^{1-\frac{a\gamma}{2}} + C(\epsilon+\delta).
  \end{aligned}
  \]
  Likewise, for any $a\ge 1$, we can take $\gamma\in(0,1]$ such that $1-\frac{a\gamma}{2}>0$, which gives immediately the desired boundedness of $u^{\epsilon,\delta}$ for $t\in(0,T)$. This completes the proof.
\end{proof}

\section{The proof of Theorem \ref{thm:pconv}} \label{sec:upconv}

This section is devoted to a proof of Theorem \ref{thm:pconv}. It is first noticed that the initial condition $(u_0^{\epsilon,\delta}, p_0^{\epsilon,\delta}, U_0^{\epsilon,\delta})$ satisfies all the assumptions in Theorem \ref{thm:uconvH1}, since for $a'=1+2a>1$, it holds that $(\epsilon+\delta)^{a'} \| \Delta u_0^{\epsilon,\delta}\|_{L^2}^2 \le C(\epsilon+\delta)$ and $(\epsilon+\delta)^{a'} \| \Delta \left( p_0^{\epsilon,\delta}, U_0^{\epsilon,\delta} \right) \|_{L^2}^2 \le C(\epsilon+\delta)\le C$ with sufficiently small values of $\epsilon$ and $\delta$.
Thus, for $T<T_{\epsilon,\delta}$, we have
\begin{equation} \label{eq:u_inf_control}
  \sup_{0<t<T} \| u^{\epsilon,\delta}(t,\cdot) \|_{L^\infty} \le C.
\end{equation}

While our focus (\ref{eq:relax}) for $(p^{\epsilon,\delta},u^{\epsilon,\delta},U^{\epsilon,\delta})$ is an approximation to (\ref{eq:ns}) for $(p^{NS},u^{NS},U^{NS})$, we construct an `intermediate' linear system for the unknowns $(p',u',U')$ as
\begin{subnumcases} {\label{eq:linear}}
  &$\epsilon \partial_t p' + \nabla \cdot u' = 0$, \label{eq:linear_a} \\
  &$\partial_t u' + \nabla \cdot U' + \nabla p' = 0$, \label{eq:linear_b} \\
  &$\delta \partial_t U' + \nabla u' = u^{NS} \otimes u^{NS} - U'$, \label{eq:linear_c} \\
  &$(p',u',U')(0,\cdot)=(p_0^{\epsilon,\delta},u_0^{\epsilon,\delta},U_0^{\epsilon,\delta})$.
\end{subnumcases}
Then our key strategy is to estimate $\| p^{\epsilon,\delta}-p'\|_{H^1}$ and $\| p'-p^{NS}\|_{H^1}$ separately so as to control $\| p^{\epsilon,\delta}-p^{NS} \|_{H^1}$, and the same has to be done for $\| u^{\epsilon,\delta}-u^{NS} \|_{H^1}$.

With such-prescribed initial conditions for (\ref{eq:linear}), we claim the following result analogous to (\ref{eq:u_inf_control}):
\begin{equation} \label{eq:u'_inf_control}
  \sup_{0<t<T} \| u'(t,\cdot) \|_{L^\infty} \le C.
\end{equation}
Indeed, the proof almost follows that of Theorem \ref{thm:uconvH1} by introducing $W=(q,v,V)$ as in (\ref{eq:resvar}). The governing equation for $V$ is now simply
\[\partial_t V + \frac{1}{\sqrt\delta}\nabla v = -\frac{V}{\delta}-\sqrt\delta\partial_t U^{NS}. \]
With the same energy $E(t) = \|W(t,\cdot)\|_{H^1}^2 + (\epsilon+\delta)^{a'} \|\Delta W(t,\cdot)\|_{L^2}^2$ as in (\ref{eq:energyH1}) (noting that $a'=1+2a>1$), we deduce that $dE/dt\le C\sqrt{(\epsilon+\delta)E}$ and hence $\sup_{0<t<T} E(t) \le C_T(\epsilon+\delta)$ for $\epsilon+\delta \ll 1$. Therefore, (\ref{eq:u'_inf_control}) results from (\ref{eq:est_inf}) in the same way as in the proof of Theorem \ref{thm:uconvH1}.

Now let us examine the differences between (\ref{eq:linear}) and the original system (\ref{eq:relax}). We have the following.

\begin{proposition} \label{prop:u'_u0}
  With all the assumptions in Theorem \ref{thm:pconv}, and further assuming that
  \begin{equation} \label{eq:u'_u0}
      \sup_{0<t<T}\|u'(t,\cdot)-u^{NS}(t,\cdot)\|_{H^1} \le C(\epsilon+\delta)
  \end{equation}
  for small values of $\epsilon$ and $\delta$, it then holds that
  \[
  \begin{aligned}
    \sup_{0<t<T} \|u^{\epsilon,\delta}(t,\cdot)-u'(t,\cdot)\|_{H^1} &\le C(\epsilon+\delta), \\
    \sup_{0<t<T} \sqrt\epsilon \| p^{\epsilon,\delta}(t,\cdot) - p'(t,\cdot) \|_{H^1} &\le C(\epsilon+\delta)
  \end{aligned}
  \]
  for small values of $\epsilon$ and $\delta$.
\end{proposition}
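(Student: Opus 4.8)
The plan is to work with the difference variables
\[
\hat W=(\hat q,\hat v,\hat V):=\bigl(\sqrt\epsilon\,(p^{\epsilon,\delta}-p'),\ u^{\epsilon,\delta}-u',\ \sqrt\delta\,(U^{\epsilon,\delta}-U')\bigr),
\]
obtained by subtracting (\ref{eq:linear}) from (\ref{eq:relax}). Since the two systems carry the same initial data, $\hat W(0,\cdot)\equiv 0$, and $\hat W$ solves the symmetric hyperbolic balance system
\[
\begin{aligned}
&\partial_t\hat q+\tfrac{1}{\sqrt\epsilon}\nabla\cdot\hat v=0,\\
&\partial_t\hat v+\tfrac{1}{\sqrt\delta}\nabla\cdot\hat V+\tfrac{1}{\sqrt\epsilon}\nabla\hat q=0,\\
&\partial_t\hat V+\tfrac{1}{\sqrt\delta}\nabla\hat v=\tfrac{1}{\sqrt\delta}N-\tfrac{1}{\delta}\hat V,
\end{aligned}
\]
whose only inhomogeneity is the quadratic term $N:=u^{\epsilon,\delta}\otimes u^{\epsilon,\delta}-u^{NS}\otimes u^{NS}$ in the $\hat V$-equation. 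The algebraic point that drives the estimate is that, writing $s:=u^{\epsilon,\delta}-u^{NS}$ and $r:=u'-u^{NS}$,
\[
N=u^{NS}\otimes s+s\otimes u^{NS}+s\otimes s,\qquad s=\hat v+r,
\]
so that differentiating $N$ never produces a bare derivative of $u^{\epsilon,\delta}$: only derivatives of the smooth $u^{NS}$, of $r$, and of the error $\hat v$. The hypothesis (\ref{eq:u'_u0}) gives $\|r\|_{H^1}\le C(\epsilon+\delta)$; the proofs of Theorem \ref{thm:uconvH1} (applicable here with exponent $a'=1+2a$, as noted above) and of (\ref{eq:u'_inf_control}) give $\|s\|_{H^1}\le C\sqrt{\epsilon+\delta}$ and the crude second-order bounds $\|\Delta s\|_{L^2},\|\Delta r\|_{L^2}\le C(\epsilon+\delta)^{-a}$; and (\ref{eq:u_inf_control}) and (\ref{eq:u'_inf_control}) give $\|u^{\epsilon,\delta}\|_{L^\infty},\|u'\|_{L^\infty}\le C$.

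I would then rerun the energy method of Section \ref{sec:uconv} for $\hat W$ with
\[
E(t)=\|\hat W(t,\cdot)\|_{H^1}^2+(\epsilon+\delta)^{2a+2}\,\|\Delta \hat W(t,\cdot)\|_{L^2}^2 .
\]
At the $L^2$-, $\nabla$- and $\Delta$-levels the first-order fluxes integrate to zero by periodicity, the dissipation $-\tfrac1\delta\|\partial^\alpha\hat V\|_{L^2}^2$ appears, and the prefactor $1/\sqrt\delta$ multiplying $N$, $\nabla N$, $\Delta N$ is absorbed into that dissipation by Young's inequality; so the whole estimate reduces to bounding $\|\partial^\alpha N\|_{L^2}$ for $|\alpha|\le 2$. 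After inserting $s=\hat v+r$, every summand of $\partial^\alpha N$ falls into one of three classes: (i) \emph{forcing} terms, in which every factor is $u^{NS}$, $r$ or a derivative of $r$; using Lemma \ref{lem:GNineq} and the crude bounds, their contribution to $\frac{d}{dt}E(t)$ is $O((\epsilon+\delta)^2)$ — at the $\Delta$-level this hinges on the weight $(\epsilon+\delta)^{2a+2}$ offsetting $\|\Delta r\|_{L^2}\le C(\epsilon+\delta)^{-a}$; (ii) terms \emph{linear} in $\hat v$ and its derivatives, bounded (after the same weighting) by $C\,E(t)$; and (iii) terms \emph{quadratic} in $\hat v$, such as $\nabla\hat v\otimes\hat v$ or the leading part of $\Delta(\hat v\otimes\hat v)$, bounded by $C(\epsilon+\delta)^{-c}E(t)^2$ with $c=a+1$ via (\ref{eq:est_inf}) and Lemma \ref{lem:GNineq} — the weight $(\epsilon+\delta)^{2a+2}$ is chosen precisely so these interpolations balance. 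Collecting the three contributions yields
\[
\frac{d}{dt}E(t)\le C\,E(t)+C(\epsilon+\delta)^2+C(\epsilon+\delta)^{-c}E(t)^2 .
\]

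With $E(0)=0$, a standard continuation argument then closes the proof: as long as $E(t)\le(\epsilon+\delta)^{c}$ the quadratic term is dominated by $E(t)$, so Grönwall's inequality gives $E(t)\le C_T(\epsilon+\delta)^2$, which — since $c=a+1<2$ and $\epsilon+\delta\ll1$ — keeps $E$ inside the range $E\le(\epsilon+\delta)^{c}$; hence $\sup_{0<t<T}E(t)\le C_T(\epsilon+\delta)^2$ for every $T<T_{\epsilon,\delta}$, and reading off the $\|\hat v\|_{H^1}$- and $\|\hat q\|_{H^1}$-parts of $E$ gives exactly the two asserted bounds. I expect the real difficulty to sit in class (i) at the second-derivative level: in the summands of $\Delta(u^{NS}\otimes s)$ and $\Delta(s\otimes s)$ in which both derivatives land on $s=\hat v+r$, one must play the weight $(\epsilon+\delta)^{2a+2}$ against the crude bound $\|\Delta s\|_{L^2}\le C(\epsilon+\delta)^{-a}$ and the $H^1$-smallness of $r$, and it is precisely in making these exponents balance — and in recording the resulting restriction on the admissible range of $a$ — that the bookkeeping becomes delicate.
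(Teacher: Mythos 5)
Your setup (subtracting (\ref{eq:relax}) from (\ref{eq:linear}), zero initial data, splitting the quadratic source through $s=u^{\epsilon,\delta}-u^{NS}$ and $r=u'-u^{NS}$ so that (\ref{eq:u'_u0}) feeds in) is in the same spirit as the paper, but the way you close the estimate has a genuine gap. By controlling $\|\hat v\|_{L^\infty}$ through interpolation against the weighted term $(\epsilon+\delta)^{2a+2}\|\Delta\hat W\|_{L^2}^2$, you produce the quadratic contribution $C(\epsilon+\delta)^{-(a+1)}E^2$, and your continuation argument then needs $C_T(\epsilon+\delta)^2\le(\epsilon+\delta)^{a+1}$, i.e.\ $a<1$. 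But in Theorem \ref{thm:pconv} the exponent $a>0$ is arbitrary, and since $\|\Delta(u_0^{\epsilon,\delta},p_0^{\epsilon,\delta},U_0^{\epsilon,\delta})\|_{L^2}\le C(\epsilon+\delta)^{-a}$ is a \emph{weaker} hypothesis for larger $a$, you cannot reduce to the case $a<1$; if the data only satisfy the crude bound with some $a\ge 1$, your bootstrap never closes. So as written the argument proves the proposition only under an extra restriction that the statement does not allow, and the ``delicate bookkeeping'' you flag at the end is in fact an obstruction, not a technicality.

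The fix is to drop the second-order level altogether, which is what the paper does. You already have the uniform bounds $\|u^{\epsilon,\delta}\|_{L^\infty}\le C$ and $\|u'\|_{L^\infty}\le C$ from (\ref{eq:u_inf_control}) and (\ref{eq:u'_inf_control}); hence $\|\hat v\|_{L^\infty}\le C$ (the paper normalizes the differences by $\mu=\max\{\sqrt\epsilon,\sqrt\delta\}$, obtaining $\|v\|_{L^\infty}\le C/\mu$, which plays the same role against the $\mu$-prefactors). With this, every ``class (iii)'' term at the $H^1$ level is bounded by $C\|\nabla\hat v\|_{L^2}^2\le CE$ rather than by an $E^2$ term with a negative power of $\epsilon+\delta$, the forcing terms are $O((\epsilon+\delta)^2)$ by (\ref{eq:u'_u0}) and the smoothness of $u^{NS}$, and the $1/\sqrt\delta$ prefactor is absorbed by the dissipation $-\|\hat V\|_{L^2}^2/\delta$ exactly as you describe. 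One then gets an inequality of the form $\frac{d}{dt}E\le C\bigl(E+(\epsilon+\delta)^2+(\epsilon+\delta)E^2\bigr)$ (in the paper's scaled variables, $\frac{d}{dt}E\le C(\mu^2+E+\mu^2E^2)$), where the quadratic term now carries a \emph{favorable} weight, so the elementary integral comparison with $E(0)=0$ yields $\sup_{0<t<T}E\le C_T(\epsilon+\delta)^2$ with no restriction on $a$ and no weighted $\Delta$-energy at all.
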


\begin{proof}
Denote
\[
  v = \frac{u^{\epsilon,\delta}-u'}{\mu}, \quad
  q = \sqrt\epsilon \frac{p^{\epsilon,\delta}-p'}{\mu}, \quad
  V = \sqrt\delta \frac{U^{\epsilon,\delta}-U'}{\mu},
\]
with $\mu=\max\{\sqrt\epsilon,\sqrt\delta\}$. We see from (\ref{eq:u_inf_control}) and (\ref{eq:u'_inf_control}) that
\begin{equation} \label{eq:vinf}
  \|v\|_{L^\infty} \le \frac{1}{\mu}\left( \|u^{\epsilon,\delta}\|_{L^\infty} + \| u' \|_{L^\infty} \right) \le \frac{C}{\mu}.
\end{equation}
Then, the system for $W=(q,v,V)$ reads as
\begin{equation}
\left\{ \
\begin{aligned}
  &\partial_t q + \frac{1}{\sqrt\epsilon} \nabla \cdot v = 0, \\
  &\partial_t v + \frac{1}{\sqrt\delta}\nabla\cdot V + \frac{1}{\sqrt\epsilon}\nabla q =0, \\
  &\partial_t V + \frac{1}{\sqrt\delta}\nabla v = \frac{\mu v\otimes v}{\sqrt\delta} + \frac{2u' v}{\sqrt\delta} + \frac{u'-u^{NS}}{\mu\sqrt\delta}\otimes u' + u^{NS}\otimes \frac{u'-u^{NS}}{\mu\sqrt\delta} - \frac{V}{\delta},
\end{aligned}
\right.
\end{equation}
with $u'v:=(u'\otimes v+v\otimes u')/2$. The initial condition is $W(0,\cdot)=0$.

Define the energy as
\[
  E(t) = \|W(t,\cdot)\|_{H^1}^2,
\]
and then the energy estimate is proceeded, for $0<t<T$, as
\[
\begin{aligned}
  \frac{d}{dt}\|W(t,\cdot)\|_{L^2}^2 &= \int_{\mathbb T^2} \frac{2V}{\sqrt\delta}:\left( \mu v\otimes v + 2u'v + \frac{u'-u^{NS}}{\mu}\otimes u' + u^{NS}\otimes\frac{u'-u^{NS}}{\mu} \right)dx - \frac{2}{\delta} \|V\|_{L^2}^2 \\
  &\le C \mu^2 \|v\otimes v\|_{L^2}^2 + C\|u'\|_{L^\infty}^2 \|v\|_{L^2}^2 + \frac{C}{\mu^2} \|u'-u^{NS}\|_{L^2}^2 \left(\|u'\|_{L^\infty}^2 + \|u^{NS}\|_{L^\infty}^2 \right) \\
  &\le C\left(\mu^2 \|v\|_{H^1}^4 + \|v\|_{L^2}^2 + \mu^2\right),
\end{aligned}
\]
where the third line is derived based on (\ref{eq:ladyzhen}) (for the first term) and (\ref{eq:u'_u0}).
Moreover, we treat
\[
  \frac{d}{dt}\|\nabla W(t,\cdot)\|_{L^2}^2 \le I_1 + I_2 + I_3 + I_4,
\]
term by term. Using H{\"o}lder's inequality and (\ref{eq:vinf}), we see that
\[
  I_1 = C\mu^2\|\nabla(v\otimes v)\|_{L^2}^2 \le C\mu^2 \|v\|_{L^\infty}^2 \|\nabla v\|_{L^2}^2
  \le C\|\nabla v\|_{L^2}^2.
\]
Then, we have
\[
\begin{aligned}
  I_2 = C\|\nabla(u'v)\|_{L^2}^2 &\le C\|\nabla (u'-u^{NS}) v\|_{L^2}^2 + C\|\nabla u^{NS}\|_{L^\infty}^2 \|v\|_{L^2}^2 + C \|u'\|_{L^\infty}^2 \|\nabla v\|_{L^2}^2 \\
  &\le C\|\nabla(u'-u^{NS})\|_{L^2}^2 \|v\|_{L^\infty}^2 + C \|v\|_{H^1}^2 \\
  &\le C\mu^2 + C\|v\|_{H^1}^2,
\end{aligned}
\]
\[
\begin{aligned}
  I_3 = \frac{C}{\mu^2}\| \nabla((u'-u^{NS})\otimes u') \|_{L^2}^2
  &\le \frac{C}{\mu^2} \| \nabla(u'-u^{NS})\otimes (u'-u^{NS}) \|_{L^2}^2 + \frac{C}{\mu^2} \| \nabla((u'-u^{NS})\otimes u^{NS}) \|_{L^2}^2 \\
  &\le \frac{C}{\mu^2} \|u'-u^{NS}\|_{L^\infty}^2 \| \nabla(u'-u^{NS})\|_{L^2}^2 +  \frac{C}{\mu^2}\| u'-u^{NS} \|_{H^1}^2 \\
  &\le C\mu^2,
\end{aligned}
\]
and, as a part of the estimate of $I_3$,
\[
\begin{aligned}
  I_4 = \frac{C}{\mu^2}\| \nabla(u^{NS}\otimes (u'-u^{NS})) \|_{L^2}^2 \le C\mu^2.
\end{aligned}
\]
As a result, we have
\[
  \frac{d}{dt}\|\nabla W(t,\cdot)\|_{L^2}^2 \le C\mu^2 + C\|v\|_{H^1}^2,
\]
whence
\[
  \frac{dE}{dt} \le C\left( \mu^2 + \|v\|_{H^1}^2 + \mu^2\|v\|_{H^1}^4 \right)
  \le C(\mu^2 + E + \mu^2 E^2)
\]
with $E(0)=0$. This leads to, for $0<t<T$, the estimate
\[
\begin{aligned}
  Ct \ge \int_0^{E(t)} \frac{dy}{\mu^2 y^2 + y + \mu^2}
  \ge \int_0^{\min\{E(t),\mu^{-2}\}} \frac{dy}{2y+\mu^2}
  = \frac{1}{2}\log\left( 1+\frac{2\min\{E(t),\mu^{-2}\}}{\mu^2} \right)
\end{aligned}
\]
(because $\mu^2y^2<y$ if $y<\mu^{-2}$) or equivalently,
\[
  \min\{E(t),\mu^{-2}\} \le C_T \mu^2.
\]
For sufficiently small $\mu<C_T^{-\frac{1}{4}}$, it is only possible that $E(t)\le C_T \mu^2$. This immediately leads to the desired estimation of $\|u^{\epsilon,\delta}-u'\|_{H^1}$ and $\|p^{\epsilon,\delta}-p'\|_{H^1}$.
\end{proof}

With Proposition \ref{prop:u'_u0}, it remains to verify both (\ref{eq:u'_u0}) and that
\begin{equation} \label{eq:p'_p0}
  \sup_{0<t<T} \sqrt\epsilon \| p'(t,\cdot) - p^{NS}(t,\cdot) \|_{H^1} \le C(\epsilon+\delta),
\end{equation}
which are proved in Sections 4.1 and 4.2, respectively.

\subsection{A proof of (\ref{eq:p'_p0})}
We start with the pressure. Recall that $\int_{\mathbb T^2} p^{NS}(t,x)dx=0$, and notice that
\[ \frac{d}{dt}\int_{\mathbb T^2} p'(t,x)dx = -\frac{1}{\epsilon} \int_{\mathbb T^2} \nabla\cdot u'dx=0. \]
Then we see from the Poincaré inequality that
\[
\begin{aligned}
  \|p'(t,\cdot)-p^{NS}(t,\cdot)\|_{L^2} &\le C \left\vert \int_{\mathbb T^2} p'(0,x)dx \right\vert + C\|\nabla p'(t,\cdot)-\nabla p^{NS}(t,\cdot)\|_{L^2} \\
  &\le C\|p'(0,\cdot)-p_0^{NS}\|_{L^2}+ C\|\nabla p'(t,\cdot)-\nabla p^{NS}(t,\cdot)\|_{L^2},
\end{aligned}
\]
where the second line is a result of the Cauchy-Schwarz inequality. Since $\sqrt\epsilon \|p'(0,\cdot)-p_0^{NS}\|_{H^1}= \sqrt\epsilon \|p_0^{\epsilon,\delta}-p_0^{NS}\|_{H^1} \le C(\epsilon+\delta)$, we only need to show that
\begin{equation} \label{eq:nabla_p'_p0}
  \sqrt\epsilon \|\nabla p'(t,\cdot)-\nabla p^{NS}(t,\cdot)\|_{L^2}\le C(\epsilon+\delta).
\end{equation}

\begin{proposition} \label{prop:prel}
  The pressure $p'$ in (\ref{eq:linear}) satisfies
  \[
    \epsilon\delta \partial_t^3 p' - (\epsilon+\delta) \partial_t \Delta p' + \epsilon \partial_t^2 p' - \Delta (p'-p^{NS}) = 0.
  \]
\end{proposition}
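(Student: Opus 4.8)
The plan is to eliminate $u'$ and $U'$ from the linear system (\ref{eq:linear}) by taking iterated divergences, and then to identify the resulting source term by means of the pressure relation hidden in the Navier--Stokes system (\ref{eq:ns}). All the differentiations below are legitimate: since the data are smooth, the classical theory of \cite{kato1975} gives $p',u',U'$ enough regularity that $\partial_t^3 p'$ and $\partial_t\Delta p'$ make sense. Throughout I use the matrix-divergence conventions consistent with the rest of the paper, namely $(\nabla\cdot A)_i=\partial_{x_j}A_{ij}$ and $(\nabla u)_{ij}=\partial_{x_j}u_i$, so that $\nabla\cdot(\nabla u')=\Delta u'$.

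First I would record the two ``one-divergence'' identities. Taking $\nabla\cdot$ of (\ref{eq:linear_b}) and using (\ref{eq:linear_a}) to replace $\nabla\cdot u'$ by $-\epsilon\partial_t p'$ yields
\[
  \nabla\cdot(\nabla\cdot U') = \epsilon\,\partial_t^2 p' - \Delta p'.
\]
Taking $\nabla\cdot$ of (\ref{eq:linear_c}) and using $\nabla\cdot(\nabla u')=\Delta u'$ yields
\[
  \delta\,\partial_t(\nabla\cdot U') + \Delta u' + \nabla\cdot U' = \nabla\cdot\bigl(u^{NS}\otimes u^{NS}\bigr).
\]
Now I would apply $\nabla\cdot$ once more to this last identity. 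The term $\Delta(\nabla\cdot u')$ becomes $-\epsilon\,\partial_t\Delta p'$ by (\ref{eq:linear_a}); the two occurrences of $\nabla\cdot(\nabla\cdot U')$ (one of them differentiated in $t$) are replaced using the first identity above. Collecting terms gives
\[
  \epsilon\delta\,\partial_t^3 p' - (\epsilon+\delta)\,\partial_t\Delta p' + \epsilon\,\partial_t^2 p' - \Delta p' = \nabla\cdot\bigl(\nabla\cdot(u^{NS}\otimes u^{NS})\bigr).
\]

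It remains to evaluate the right-hand side. Taking $\nabla\cdot$ of the $u^{NS}$-momentum equation in (\ref{eq:ns}) and using the incompressibility constraint $\nabla\cdot u^{NS}=0$ twice (once on $\partial_t\nabla\cdot u^{NS}$ and once on $\Delta(\nabla\cdot u^{NS})$) gives $\nabla\cdot(\nabla\cdot(u^{NS}\otimes u^{NS}))=-\Delta p^{NS}$. Substituting this into the displayed identity produces exactly the claimed equation $\epsilon\delta\,\partial_t^3 p' - (\epsilon+\delta)\,\partial_t\Delta p' + \epsilon\,\partial_t^2 p' - \Delta(p'-p^{NS})=0$. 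There is no genuine obstacle here: the argument is pure bookkeeping, and the only point worth a remark in passing is the regularity needed to justify the three time-derivatives of $p'$, which is automatic from the smoothness of the data and the linear structure of (\ref{eq:linear}).
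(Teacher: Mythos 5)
Your proposal is correct and follows essentially the same route as the paper: both eliminate $u'$ and $U'$ by taking iterated divergences of (\ref{eq:linear_b})--(\ref{eq:linear_c}), replace $\nabla\cdot u'$ by $-\epsilon\partial_t p'$ via (\ref{eq:linear_a}), and use $\nabla\cdot\bigl(\nabla\cdot(u^{NS}\otimes u^{NS})\bigr)=-\Delta p^{NS}$ from (\ref{eq:ns}). The only difference is the bookkeeping order of the substitutions, which does not change the argument.
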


\begin{proof}
  Let $\dive^2 = \nabla\cdot(\nabla\cdot)$ act on (\ref{eq:linear_c}). Noticing that $\dive^2 \nabla u'=\Delta \dive{u'}$, we obtain
  \[
    \delta \partial_t \dive^2 U' + \Delta \dive u' = \dive^2 (u^{NS}\otimes u^{NS}) - \dive^2 U'.
  \]
  Using (\ref{eq:linear_b}) and $\dive^2 (u^{NS}\otimes u^{NS}) =-\Delta p^{NS}$ from (\ref{eq:ns}), we have
  \begin{equation} \label{eq:div2U}
    \delta \partial_t \dive^2 U' + \Delta \dive u' = -\Delta p^{NS} + \partial_t \dive u' + \Delta p'.
  \end{equation}
  Now let $\delta\partial_t\dive$ act on (\ref{eq:linear_b}). This leads to
  \[
  \begin{aligned}
    \delta \partial_t^2 \dive u' + \delta \partial_t \Delta p' &= -\delta \partial_t \dive^2 U' \\
    &= \Delta \dive u' + \Delta p^{NS} - \partial_t \dive u' - \Delta p',
  \end{aligned}
  \]
  where (\ref{eq:div2U}) is used to get the second expression. Then the assertion in the proposition follows immediately by replacing $\dive u'$ with $-\epsilon \partial_t p'$, thanks to (\ref{eq:linear_a}).
\end{proof}

Now we introduce
\begin{subequations}
\begin{align}
  f &= \delta \partial_t p' + p'-p^{NS}, \label{eq:fdef} \\
  g &= \sqrt{\epsilon \delta} \partial_t \nabla p',
\end{align}
\end{subequations}
and they satisfy
\begin{subequations} \label{eq:fg}
\begin{align}
  (\epsilon \partial_t^2 - \Delta) f - \sqrt{\frac{\epsilon}{\delta}} \nabla \cdot g &= \epsilon \partial_t^2 p^{NS}, \label{eq:fg_a} \\
  \partial_t g - \sqrt{\frac{\epsilon}{\delta}}\nabla\partial_t f &= -\frac{g}{\delta} + \sqrt{\frac{\epsilon}{\delta}} \partial_t \nabla p^{NS}, \label{eq:fg_b}
\end{align}
\end{subequations}
where (\ref{eq:fg_b}) is the restriction given by the definition of $f$ and $g$, and (\ref{eq:fg_a}) is the reformulation of Proposition \ref{prop:prel}.

The system (\ref{eq:fg}) is equipped with an energy
\begin{equation} \label{eq:E_fg}
  E(t) = \int_{\mathbb T^2} \left( \epsilon (\partial_t f)^2 + |\nabla f|^2 + |g|^2 \right) dx.
\end{equation}
But before we perform an energy estimate, let us explain the relationship between (\ref{eq:fg}) and our goal, the estimate of $\|\nabla (p'-p^{NS})\|_{H^1}$ in (\ref{eq:nabla_p'_p0}).
From (\ref{eq:fdef}) (the definition of $f$) an ordinary differential equation in time can be derived as (with spatial dependence omitted for clarity)
\[
  \partial_t Ap' + \frac{1}{\delta} Ap' = \frac{1}{\delta} Af + \frac{1}{\delta} Ap^{NS}
\]
for $A=\partial_t$ or $\nabla$. Solving this `first-order' equation for $Ap'(t)$ gives
\begin{equation} \label{eq:A_p'_p0}
  Ap'(t) = e^{-\frac{t}{\delta}} Ap'(0) + \int_0^t \frac{1}{\delta} e^{\frac{t'-t}{\delta}} Af(t')dt' + \int_0^t \frac{1}{\delta} e^{\frac{t'-t}{\delta}} Ap^{NS}(t')dt',
\end{equation}
or equivalently,
\[
\begin{aligned}
  A(p'-p^{NS})(t) =\ & e^{-\frac{t}{\delta}} \left[ A(p'-p^{NS})(0) + A(p^{NS}(0) - p^{NS}(t)) \right] \\
  & + \int_0^t \frac{1}{\delta} e^{\frac{t'-t}{\delta}} Af(t')dt'
  + \int_0^t \frac{1}{\delta} e^{\frac{t'-t}{\delta}} A(p^{NS}(t')-p^{NS}(t))dt'.
\end{aligned}
\]
Taking $A=\nabla$, we have the estimate
\begin{equation} \label{eq:nabla_p'_p0_est}
  \|\nabla(p'-p^{NS})(t,\cdot)\|_{L^2} \le C_t \|\nabla(p'-p^{NS})(0,\cdot)\|_{L^2} + \sup_{0<t'<t}\sqrt{E(t')} + C_t \delta,
\end{equation}
where $E(t)$ is defined in (\ref{eq:E_fg}), the second term of time integral is treated as
\[
  \| \int_0^t \frac{1}{\delta} e^{\frac{t'-t}{\delta}} \nabla f(t')dt' \|_{L^2} \le \sup_{0<t'<t} \|\nabla f(t',\cdot)\|_{L^2} \int_0^t \frac{1}{\delta} e^{\frac{t'-t}{\delta}}dt' \le \sup_{0<t'<t} \sqrt{E(t')},
\]
and the last term is deduced by the mean value theorem as
\[
  \| \int_0^t \frac{1}{\delta} e^{\frac{t'-t}{\delta}} \nabla(p^{NS}(t')-p^{NS}(t))dt' \|_{L^2} \le C_t \int_0^t \frac{1}{\delta} e^{-\frac{t-t'}{\delta}} (t-t')dt' \le C_t \delta.
\]
Since $\delta\le C\sqrt\epsilon$, thanks to (\ref{eq:nabla_p'_p0_est}), we only need to show that
\begin{equation} \label{eq:E_fg_est}
  \epsilon E(t) \le C_T(\epsilon+\delta)^2
\end{equation}
to verify (\ref{eq:nabla_p'_p0}) and hence (\ref{eq:p'_p0}).

To show (\ref{eq:E_fg_est}), the energy estimate for (\ref{eq:fg}) is proceeded as
\[
\begin{aligned}
  \frac{d}{dt}E(t) &= 2\epsilon \int_{\mathbb T^2} (\partial_t f) (\partial_t^2 p^{NS})dx - 2\int_{\mathbb T^2} \frac{|g|^2}{\delta}dx + 2\sqrt{\frac{\epsilon}{\delta}} \int_{\mathbb T^2} g \cdot \partial_t \nabla p^{NS} dx \\
  & \le C\epsilon \|\partial_t f\|_{L^2} + C\epsilon - \frac{3}{2\delta} \|g\|_{L^2}^2,
\end{aligned}
\]
where we used H{\"o}lder's inequality, in particular that $\sqrt{\frac{\epsilon}{\delta}} \int_{\mathbb T^2} g \cdot \partial_t \nabla p^{NS} dx \le \frac{1}{4\delta}\|g\|_{L^2}^2 + \epsilon \|\partial_t \nabla p^{NS}\|_{L^2}^2$. It further leads to
\[
  \frac{d}{dt}E(t) \le C\sqrt{\epsilon E} + C\epsilon,
\]
whence
\[
  Ct \ge \int_{E(0)}^E \frac{dy}{\sqrt{\epsilon y}+\epsilon}
  \ge \int_{\max\{E(0),\epsilon\}}^E \frac{dy}{2\sqrt{\epsilon y}} = \frac{1}{\sqrt\epsilon} \left( \sqrt E - \sqrt{\max\{E(0),\epsilon\}} \right)
\]
because of $\sqrt{\epsilon y}>\epsilon$ when $y>\epsilon$. Hence, we have
\[
\begin{aligned}
  E(t) &\le \left( C\sqrt\epsilon t+ \sqrt{\max\{E(0),\epsilon\}} \right)^2 \\
  &\le E(0)+C\epsilon(1+t^2).
\end{aligned}
\]
Clearly, (\ref{eq:E_fg_est}) holds if we can show that
\begin{equation} \label{eq:E0_fg_est}
  \epsilon E(0) \le C(\epsilon+\delta)^2.
\end{equation}

For this purpose, we deduce the initial conditions of the system (\ref{eq:fg}) as
\[
\begin{aligned}
  f(0,\cdot) &= -\frac{\delta}{\epsilon}\nabla \cdot u'(0,\cdot) + p'(0,\cdot) - p^{NS}(0,\cdot), \\
  \partial_t f(0,\cdot) &= \frac{\delta}{\epsilon}\left( \dive^2 U'(0,\cdot) + \Delta p'(0,\cdot) \right) - \frac{1}{\epsilon} \nabla \cdot u'(0,\cdot) - \partial_t p^{NS}(0,\cdot), \\
  g(0,\cdot) &= -\sqrt{\frac{\delta}{\epsilon}} \nabla \dive u'(0,\cdot),
\end{aligned}
\]
where we used $\partial_t p'=-\frac{1}{\epsilon}\dive u'$ and $\partial_t^2 p' = \frac{1}{\epsilon} \dive^2 U' + \Delta p'$ from (\ref{eq:linear_a}) and (\ref{eq:linear_b}). Then we examine $\epsilon E(0)$ term by term.
For the first term, we see that $\epsilon^2\|\partial_tp^{NS}(0,\cdot)\|_{L^2}^2 \le C\epsilon^2\le C(\epsilon+\delta)^2$,
\[
  \delta^2 \left(\|\dive^2 U'(0,\cdot)\|_{L^2}^2 + \|\Delta p'(0,\cdot)\|_{L^2}^2 \right)
  \le \delta^2 \left(\|\nabla\dive U'(0,\cdot)\|_{L^2}^2 + \|\Delta p'(0,\cdot)\|_{L^2}^2 \right) \le C(\epsilon+\delta)^2
\]
due to the assumptions in Theorem \ref{thm:pconv}, and
\begin{equation} \label{eq:divv_initial}
  \|\nabla\cdot u'(0,\cdot)\|_{L^2} \le C \| \nabla \dive u'(0,\cdot) \|_{L^2} \le C(\epsilon+\delta)
\end{equation}
due to the Poincaré inequality (on $\mathbb T^2$) and the assumption in Theorem \ref{thm:pconv}, giving
\[ \epsilon \int_{\mathbb T^2} \epsilon(\partial_t f(0,x))^2dx \le C(\epsilon+\delta)^2. \]
For the remaining terms, we have
\[
\begin{aligned}
  \epsilon\int_{\mathbb T^2} |\nabla f(0,x)|^2 + |g(0,x)|^2 dx
  &\le \left( \frac{\delta^2}{\epsilon} + \delta \right) \| \nabla \dive u'(0,\cdot) \|_{L^2}^2 + \epsilon \| \nabla p'(0,\cdot) - \nabla p_0^{NS} \|_{L^2}^2 \\
  &\le C(\epsilon+\delta)^2.
\end{aligned}
\]
Therefore, (\ref{eq:E0_fg_est}) is verified and hence the proof of (\ref{eq:p'_p0}) is complete.

\subsection{A proof of (\ref{eq:u'_u0})}
Noticing that
\[
  \frac{d}{dt}\int_{\mathbb T^2} u(t,x)dx = - \int_{\mathbb T^2} (\nabla\cdot U + \nabla p)dx = 0
\]
for $u=u'$ and $u^{NS}$, we then see from the Poincaré inequality that
\begin{equation} \label{eq:u'_u0_tp}
\begin{split}
  \|u'(t,\cdot)-u^{NS}(t,\cdot)\|_{L^2} &\le C \left\vert \int_{\mathbb T^2} \left( u'(0,x) - u_0^{NS} \right) dx \right\vert + C\|\nabla (u'-u^{NS})(t,\cdot)\|_{L^2} \\
  &\le C\|u'(0,\cdot)-u_0^{NS}\|_{L^2}+ C\|\nabla (u'-u^{NS})(t,\cdot)\|_{L^2}.
\end{split}
\end{equation}
Since $\|u'(0,\cdot)-u_0^{NS}\|_{L^2} \le C(\epsilon+\delta)$, to control $\|u'(t,\cdot)-u^{NS}(t,\cdot)\|_{H^1}$ in (\ref{eq:u'_u0}), it suffices to show that
\begin{equation}
  \| \nabla(u'-u^{NS})(t,\cdot)\|_{L^2} \le C(\epsilon+\delta).
\end{equation}
For this purpose, we apply Lemma \ref{lem:helmholtz} to control the $L^2$ norms of $\dive u'$ (noticing $\dive u^{NS}=0$) and $\nabla\times (u'-u^{NS})$, respectively.

The estimation of $\|\dive u'\|_{L^2}$ results from (\ref{eq:A_p'_p0}) (with $A=\partial_t$) as
\begin{equation} \label{eq:divuprimeest}
\begin{split}
  \|\nabla \cdot u'(t,\cdot) \|_{L^2} &= \epsilon \|\partial_t p'(t,\cdot)\|_{L^2} \\
  &\le \| \nabla \cdot u'(0,\cdot)\|_{L^2} + \sup_{0<t'<t} \sqrt{\epsilon E(t')} + C_t\epsilon \\
  &\le C(\epsilon+\delta).
\end{split}
\end{equation}
Here the first line results from (\ref{eq:linear_a}), the second line is derived in a similar way to that of (\ref{eq:nabla_p'_p0_est}), and the last control is due to (\ref{eq:E_fg_est}) and (\ref{eq:divv_initial}).

Then our final task is show that
\begin{equation} \label{eq:curl_u'_u0}
  \|\nabla\times (u'-u^{NS})(t,\cdot)\|_{L^2} \le C(\epsilon+\delta),
\end{equation}
which is known to hold at $t=0$ because of the theorem assumption and (\ref{eq:divv_initial}).

For this purpose, we denote the vorticity of (\ref{eq:linear}) by $\omega' = \nabla \times u'=\partial_{x_1}u_2'-\partial_{x_2}u_1'\in\mathbb R$ and $\Omega' = \nabla \times U'=(\partial_{x_1} U_{12}'-\partial_{x_2}U_{11}', \  \partial_{x_1}U_{22}'-\partial_{x_2}U_{21}') \in\mathbb R^2$.
The governing equation for $\omega'$ and $\Omega'$ reads as
\[
\left\{ \
\begin{aligned}
  &\partial_t \omega' + \nabla \cdot \Omega' =0, \\
  &\delta \partial_t \Omega' + \nabla \omega' = \nabla\times (u^{NS}\otimes u^{NS}) - \Omega'.
\end{aligned}
\right.
\]
Similarly, the vorticity $\omega^{NS}=\nabla\times u^{NS}$ and $\Omega^{NS} = \nabla \times U^{NS}$ of (\ref{eq:ns}) are governed by
\[
\left\{ \
\begin{aligned}
  &\partial_t \omega^{NS} + \nabla \cdot \Omega^{NS} =0, \\
  &\Omega^{NS} = -\nabla \omega^{NS} + \nabla \times (u^{NS}\otimes u^{NS}).
\end{aligned}
\right.
\]

Denote
\[
  \xi = \frac{\omega'-\omega^{NS}}{\sqrt\delta}, \quad
  X = \Omega' - \Omega^{NS} - e^{-\frac{t}{\delta}} (\Omega'(0,\cdot)-\Omega^{NS}(0,\cdot)).
\]
We then derive the governing system for $(\xi,X)$ as
\begin{equation}
\left\{ \
\begin{aligned}
  &\partial_t \xi + \frac{1}{\sqrt{\delta}} \nabla \cdot X = -\frac{1}{\sqrt\delta} e^{-\frac{t}{\delta}}\nabla\cdot(\Omega'(0,\cdot)-\Omega^{NS}(0,\cdot)), \\
  &\partial_t X + \frac{1}{\sqrt\delta} \nabla \xi = -\frac{X}{\delta} - \partial_t \Omega^{NS},
\end{aligned}
\right.
\end{equation}
with the initial condition satisfying $X(0,\cdot)=0$ and
\[
  \|\xi(0,\cdot)\|_{L^2} = \frac{1}{\sqrt\delta} \| (\omega'-\omega{NS})(0,\cdot) \|_{L^2} \le \frac{C(\epsilon+\delta)}{\sqrt\delta}.
\]
The energy estimate for $\|(\xi,X)\|_{L^2}^2$ gives:
\[
\begin{aligned}
  \frac{d}{dt}\|(\xi,X)\|_{L^2}^2 &= -\frac{2}{\sqrt\delta} e^{-\frac{t}{\delta}} \int_{\mathbb T^2} \xi \nabla\cdot(\Omega'-\Omega^{NS})(0,x)dx - 2\int_{\mathbb T^2} X\cdot(\partial_t\Omega^{NS}) dx - \frac{2}{\delta} \int_{\mathbb T^2} |X|^2 dx \\
  &\le \frac{2}{\sqrt\delta} e^{-\frac{t}{\delta}} \|\xi\|_{L^2}\cdot \| \nabla\cdot(\Omega'-\Omega^{NS})(0,\cdot) \|_{L^2} + C\delta,
\end{aligned}
\]
where H{\"o}lder's inequality $\|X\cdot(\delta_t\Omega^{NS})\|_{L^1} \le \frac{1}{2\delta}\|X\|_{L^2} + 2\delta \|\partial_t\Omega^{NS}\|_{L^2}$ was used.

Further denote
\[ h=\left( \|(\xi,X)\|_{L^2}^2+\delta \right)^{\frac{1}{2}}. \]
We see that
\[
\begin{aligned}
  \frac{dh}{dt}=\frac{1}{2h}\frac{d}{dt}\|(\xi,X)\|_{L^2}^2
  &\le \frac{1}{\sqrt\delta} e^{-\frac{t}{\delta}} \frac{\|\xi\|_{L^2}}{h}\cdot \| \nabla\cdot(\Omega'-\Omega^{NS})(0,\cdot) \|_{L^2} + C\frac{\delta}{h} \\
  &\le \frac{1}{\sqrt\delta} e^{-\frac{t}{\delta}} \| \nabla\cdot(\Omega'-\Omega^{NS})(0,\cdot) \|_{L^2} + Ch.
\end{aligned}
\]
It can be verified through a direct calculation (with integration by parts) that
\[ \|\dive (\Omega'-\Omega^{NS})(0,\cdot)\|_{L^2} \le \| \nabla\dive (U'-U^{NS})(0,\cdot) \|_{L^2}. \]
Moreover, we have
\[
\begin{aligned}
  & \delta \|\nabla \dive U'(0,\cdot)\|_{L^2} \le C(\epsilon+\delta), \\
  &\delta \|\nabla \dive U^{NS}(0,\cdot)\|_{L^2} \le C\delta\le C(\epsilon+\delta),
\end{aligned}
\]
which lead to
\[
  \frac{dh}{dt}\le Ch + \frac{C(\epsilon+\delta)}{\delta^{\frac{3}{2}}}e^{-\frac{t}{\delta}}
\]
with
\[ h(0)\le \frac{C(\epsilon+\delta)}{\sqrt\delta}. \]
We thus obtain
\[
  h(t) \le \frac{e^{Ct}}{1+C\delta} \left( 1-e^{-\left( C+\frac{1}{\delta} \right)t} \right) \frac{C(\epsilon+\delta)}{\sqrt\delta}
  \le \frac{C_T(\epsilon+\delta)}{\sqrt\delta},
\]
and (\ref{eq:curl_u'_u0}) follows immediately for $0<t<T$:
\[ \| (\omega'-\omega^{NS})(t,\cdot) \|_{L^2} = \sqrt\delta \|\xi(t,\cdot)\|_{L^2} \le \sqrt\delta h(t) \le C(\epsilon+\delta). \]
This completes the proof of Theorem \ref{thm:pconv}.

\begin{remark}
  In the proof of (\ref{eq:u'_u0}), we use the fact that $\frac{d}{dt}\int u'dx=0$ on $\mathbb T^2$, which may not hold on other types of domains with boundaries. In these situations, proper boundary conditions are needed.
\end{remark}

\begin{remark}
  The use of an intermediate system plays an essential role in the convergence proof of $p^{\epsilon,\delta}$. For our current regime $\delta = O(\sqrt\epsilon)$, another choice of the intermediate system is to take $\delta=0$ in (\ref{eq:linear}). In this way, a result similar to Theorem \ref{thm:pconv} can be derived, which is omitted here. But it does not apply for $\delta \gtrsim \sqrt\epsilon$ either.
\end{remark}

\begin{remark} \label{rm:fric}
  Finally, for the model with a linear friction term $-u$, Theorem \ref{thm:pconv} can be proved with the same arguments as in this section. Notice that in this case, we have
  \[ \frac{d}{dt}\int_{\mathbb T^2} (u'-u^{NS}) dx = - \int_{\mathbb T^2} (u'-u^{NS}) dx, \]
  and thus $\left | \int_{\mathbb T^2} (u'-u^{NS}) dx \right |$ decreases in time. Therefore, (\ref{eq:u'_u0_tp}) still holds in this case.
\end{remark}

\section{Conclusions} \label{sec:concl}
In this paper, we introduce a novel hyperbolic relaxation approximation (\ref{eq:relax}) to the incompressible Navier-Stokes equation (\ref{eq:ns}) with the artificial compressibility (AC) method. The system contains two relaxation parameters $\epsilon$ (for the AC term) and $\delta$ (for the first-order relaxation term), and the solution from smooth initial data exists for each $(\epsilon,\delta)$. We show that, as $\epsilon$ and $\delta$ both tend to zero, the existing time of the smooth solution of (\ref{eq:relax}) goes to infinity, and these solutions (namely, fluid velocity and pressure) converge to solutions of (\ref{eq:ns}) for $\delta=o(\sqrt\epsilon)$ and properly-prepared initial data. 
Our analysis consists of (\textit{i}) construction of an `intermediate' linear system (\ref{eq:linear}) and (\textit{ii}) energy estimates of the errors between (\ref{eq:linear}) and the two target systems (\ref{eq:ns}) and (\ref{eq:relax}). In Step (\textit{ii}), the error of the fluid velocities is controlled respectively by their divergence and curl, the latter of which (namely the vorticity) further requires initial layer corrections.

Future work could be directed towards deeper analyses of the singular limits of (\ref{eq:relax}), including more general relations between the two parameters and three-dimensional cases. Numerical investigations of (\ref{eq:relax}) will also be highly beneficial.

\section*{Acknowledgements}
Funding by the Deutsche Forschungsgemeinschaft (DFG, German Research Foundation) - SPP 2410 \textit{Hyperbolic Balance Laws in Fluid Mechanics: Complexity, Scales, Randomness (CoScaRa)} and the Sino-German cooperation group on \textit{Advanced Numerical Methods for nonlinear Hyperbolic Balance Laws and their Applications} is gratefully acknowledged.


\appendix
\section{Proof of Lemma \ref{lem:GNineq}} \label{app:GNineq}

\begin{proof}[Proof of (\ref{eq:est_dv})]
  For $v\in H^2(\mathbb T^2)$, it holds that
  \[
  \begin{aligned}
    \|\nabla v\|_{L^2}^2 &= \sum_{k\in\mathbb Z^2} |k|^2 |\hat v(k)|^2 \\
    &\le \left( \sum_{k\in\mathbb Z^2} |\hat v(k)|^2 \right)^{\frac{1}{2}} \left( \sum_{k\in\mathbb Z^2} |k|^4 |\hat v(k)|^2 \right)^{\frac{1}{2}} \\
    &= \|v\|_{L^2} \|\nabla^2 v\|_{L^2}
  \end{aligned}
  \]
  with $\hat v(k)$ being the Fourier transform of $v$ on $\mathbb T^2$.
\end{proof}

\begin{proof}[Proof of (\ref{eq:est_inf})]
  Using the Fourier transform and H\"{o}lder's inequality, we have
  \[ \|v\|_{L^\infty} \le |\hat v(0)| + \sum_{\substack{k\in\mathbb Z^2 \\ k\ne 0}} |\hat v(k)| \le C\|v\|_{L^2} + \sum_{\substack{k\in\mathbb Z^2 \\ k\ne 0}} |\hat v(k)|. \]
  It then holds that
  \[
  \begin{aligned}
    \sum_{\substack{k\in\mathbb Z^2 \\ k\ne 0}} |\hat v(k)| &= \sum_{1\le |k|\le N} |\hat v(k)|^\gamma \cdot |k\hat v(k)|^{1-\gamma} \cdot |k|^{\gamma-1} + \sum_{|k|>N} |\hat v(k)|^{\frac{\gamma}{4}} \cdot |k\hat v(k)|^{1-\gamma} \cdot |k^2\hat v(k)|^{\frac{3\gamma}{4}} \cdot |k|^{-1-\frac{\gamma}{2}} \\
    &\le C \|\hat v\|_{l^2}^\gamma \cdot \|k\hat v\|_{l^2}^{1-\gamma} \left( \sum_{1\le|k|\le N} |k|^{2\gamma-2} \right)^{\frac{1}{2}}
    + C \|\hat v\|_{l^2}^{\frac{\gamma}{4}} \cdot \|k\hat v\|_{l^2}^{1-\gamma} \cdot \|k^2\hat v\|_{l^2}^{\frac{3\gamma}{4}} \left( \sum_{|k|>N} |k|^{-2-\gamma} \right)^{\frac{1}{2}} \\
    &\le C \|v\|_{L^2}^\gamma \cdot \|\nabla v\|_{L^2}^{1-\gamma} \cdot N^\gamma + C \|v\|_{L^2}^{\frac{\gamma}{4}} \cdot \|\nabla v\|_{L^2}^{1-\gamma} \cdot \|\nabla^2 v\|_{L^2}^{\frac{3\gamma}{4}} \cdot N^{-\frac{\gamma}{2}}
\end{aligned}
\]
for $0<\gamma\le 1$ and $N\in\mathbb N$. Here the second line results from the extended H\"{o}lder's inequality. The third line resorts to the elementary facts that
\[
\begin{aligned}
\int_{1}^{N}x^{2\gamma-2}dx &<\int_{1}^{N}x^{2\gamma-1}dx <\frac{1}{2\gamma} N^{2\gamma}, \\
\int_{N}^{\infty}x^{-2-\gamma}dx &<\int_{N}^{\infty}x^{-1-\gamma}dx=\frac{1}{\gamma}N^{-\gamma}, 
\end{aligned}
\]
so here the constants $C$ rely on $\gamma$.

Now take $N\in\mathbb N$ such that
\[
  N-1\le \left(\frac{\|\nabla^2 v\|_{L^2}}{\|v\|_{L^2}}\right)^{\frac{1}{2}} <N.
\]
If $N\ge 2$, it is straightforward to show that
\[ \left(\frac{\|\nabla^2 v\|_{L^2}}{\|v\|_{L^2}}\right)^{\frac{1}{2}} < N \le 2\left(\frac{\|\nabla^2 v\|_{L^2}}{\|v\|_{L^2}}\right)^{\frac{1}{2}}, \]
and the above estimate directly gives
\begin{equation} \label{eq:vinf_kne0}
  \sum_{\substack{k\in\mathbb Z^2 \\ k\ne 0}} |\hat v(k)| \le C\|v\|_{L^2}^{\frac{\gamma}{2}} \cdot \|\nabla v\|_{L^2}^{1-\gamma} \cdot\|\nabla^2 v\|_{L^2}^{\frac{\gamma}{2}}
\end{equation}
for the both terms as desired. On the other hand, if $N=1$, which implies $\|\nabla^2 v\|_{L^2}<\|v\|_{L^2}$, then the above estimate results in
\begin{equation} \label{eq:vinf_N1_1}
  \sum_{\substack{k\in\mathbb Z^2 \\ k\ne 0}} |\hat v(k)| \le C \|v\|_{L^2}^\gamma \cdot \|\nabla v\|_{L^2}^{1-\gamma}.
\end{equation}
Meanwhile, it holds that
\begin{equation} \label{eq:vinf_N1_2}
\begin{split}
  \sum_{\substack{k\in\mathbb Z^2 \\ k\ne 0}} |\hat v(k)| &= \sum_{|k|\ge 1} |k\hat v|^{1-\gamma} \cdot |k^2\hat v|^{\gamma} \cdot |k|^{-1-\gamma} \\
  & \le \|k\hat v\|_{l^2}^{1-\gamma} \cdot \|k^2 \hat v\|_{l^2}^\gamma \cdot \left( \sum_{|k|\ge 1} |k|^{-2-2\gamma} \right)^{\frac{1}{2}} \\
  & \le C \|\nabla v\|_{L^2}^{1-\gamma} \cdot \|\nabla^2 v\|_{L^2}^\gamma.
\end{split}
\end{equation}
Clearly, (\ref{eq:vinf_N1_1}) and (\ref{eq:vinf_N1_2}) imply (\ref{eq:vinf_kne0}) immediately for $N=1$. This completes the proof.
\end{proof}

\section{A proof of Lemma \ref{lem:explosion}} \label{app:blow}

This appendix is devoted to a proof of Lemma \ref{lem:explosion}. The proof will require one more interpolation inequality for $v\in H^2(\mathbb T^2)$:
\begin{equation} \label{eq:nabla_v_L4}
  \|\nabla v\|_{L^4}^2 \le C \|v\|_{L^\infty} \|\nabla^2 v\|_{L^2}.
\end{equation}
This can be proved through integration by parts and the H\"{o}lder's inequality:
\[
\begin{aligned}
  \|\nabla v\|_{L^4}^4 &= \int_{\mathbb T^2} \nabla v\cdot \nabla v |\nabla v|^2 dx \\
  &= - \int_{\mathbb T^2} v \nabla^2 v |\nabla v|^2 dx - \int_{\mathbb T^2} v\nabla v\cdot \nabla(|\nabla v|^2) \\
  & \le C \|v\|_{L^\infty} \|\nabla^2 v\|_{L^2} \|\nabla v\|_{L^4}^2.
\end{aligned}
\]

Assume for the sake of contradiction that
\begin{equation} \label{eq:contr}
  \int_0^T \|U(t)\|_{L^\infty} dt < \infty.
\end{equation}
Now we perform an energy estimate of (\ref{eq:quad_model}), yielding that
\[
\begin{aligned}
  \frac{d}{dt}\|U(t)\|_{H^2}^2 &= 2 \int_{\mathbb T^2} (U,\nabla U,\nabla^2 U) \cdot \left[ (F,\nabla F,\nabla^2 F) + (Q(U),\nabla Q(U), \nabla^2 Q(U)) \right] dx \\
  & \le C \|U(t)\|_{H^2} \cdot \|F(t)\|_{H^2} + C \|U(t)\|_{H^2} \left(\int_{\mathbb T^2} \left( |Q(U)|^2 + |\nabla Q(U)|^2 + |\nabla^2 Q(U)|^2 \right) dx \right)^{\frac{1}{2}}.
\end{aligned}
\]
Since $Q(U)$ only contains terms with the form $aU_i+bU_iU_j$, we have $\int |Q(U)|^2 dx \le C \|U(t)\|_{L^2}^2 \big(1+\|U(t)\|_{L^\infty}^2 \big)$, $\int |\nabla Q(U)|^2 dx \le C \|\nabla U(t)\|_{L^2}^2 \big(1+\|U(t)\|_{L^\infty}^2 \big)$, and
\[
\begin{aligned}
  \int |\nabla^2 Q(U)|^2 dx &\le C \|\nabla U\|_{L^4}^4 + C\|\nabla^2 U(t)\|_{L^2}^2 \big(1+\|U(t)\|_{L^\infty}^2 \big) \\
  &\le C\|\nabla^2 U(t)\|_{L^2}^2 \big(1+\|U(t)\|_{L^\infty}^2 \big),
\end{aligned}
\]
where we used (\ref{eq:nabla_v_L4}) for the last inequality. Thus, we obtain
\[
  \frac{d}{dt}\|U(t)\|_{H^2}^2 \le C \|U(t)\|_{H^2} \cdot \|F(t)\|_{H^2} + C \|U(t)\|_{H^2}^2 \left( 1+\|U(t)\|_{L^\infty} \right),
\]
or equivalently,
\[
  \frac{d}{dt} \left[ \|U(t)\|_{H^2} \exp \left( -C\int_0^t (1+\|U(s)\|_{L^\infty})ds \right) \right] \le C \|F(s)\|_{H^2} \exp \left( -C\int_0^t (1+\|U(s)\|_{L^\infty})ds \right)
\]
for $t<T$. By the Gronwall's inequality this leads to
\[
\begin{aligned}
  \|U(t)\|_{H^2} \le &\|U(0)\|_{H^2} \exp \left(C\int_0^t (1+\|U(s)\|_{L^\infty})ds \right) \\
  &+ C\int_0^t \|F(s)\|_{H^2} \exp \left(C\int_s^t (1+\|U(w)\|_{L^\infty})dw \right) ds.
\end{aligned}
\]
Considering (\ref{eq:contr}), we see from the above control of $\|U(t)\|_{H^2}$ and the dominated convergence theorem that
\[
  U\in L^\infty ([0,T),H^2(\mathbb T^2)).
\]
By (\ref{eq:quad_model}), using (\ref{eq:ladyzhen}) and (\ref{eq:est_inf}), we get
\[
  \|\partial_t U(t)\|_{H^1} \le C\left( \|U(t)\|_{H^2}+\|U(t)\|_{H^2}^2 + \|F(t)\|_{H^1} \right),
\]
indicating that
\[
  \partial_t U \in L^\infty([0,T),H^1(\mathbb T^2))
\]
and hence
\[
  U\in C^{0,1}([0,T),H^1(\mathbb T^2)).
\]
Thus, $U$ can be continuously extended to $t = T$ in $H^1(\mathbb T^2)$.

Now $U(t,x)$ with $t < T$ is a solution of the linear equation
\[
  \partial_t V+\sum_{i=1}^2 A_i \partial_{x_i} V = q(t,x)V + F(t,x)
\]
with
\[
  q \in L^\infty([0,T],H^2(\mathbb T^2)) \cap C^{0,1}([0,T],H^1(\mathbb T^2)).
\]
Thanks to Theorem 1 in \cite{kato1975}, the solution of this linear system exists uniquely in $V \in C^0([0,T],H^2(\mathbb T^2)) \cap C^1([0,T],H^1(\mathbb T^2))$. Consequently, $U=V$ on $t\in[0,T)$ and can be continuously extended to $t = T$ in $H^2(\mathbb T^2)$.

Finally, using the local existence theory in \cite{kato1975}, we solve
\[
  \partial_t U' + \sum_{i=1}^{2} A_i \partial_{x_i} U' = Q(U') + F(t,x)
\]
for $T<t<T+\epsilon$ with $\epsilon>0$ and $U'(T)=U(T)$. Then we see that
\[
  U''(t): = \left\{
  \begin{aligned}
    U(t), \quad &t\le T \\
    U'(t), \quad &T<t<T+\epsilon
  \end{aligned}
  \right. \in C^0([0,T+\epsilon),H^2(\mathbb T^2)) \cap C^1([0,T+\epsilon),H^1(\mathbb T^2))
\]
solves (\ref{eq:quad_model}) on $[0,T+\epsilon)$, which contradicts the fact that $T$ is the maximum existing time of the solution. This completes the proof of Lemma \ref{lem:explosion}.

\bibliographystyle{amsplain}
\bibliography{references}

\end{document}